\newtheorem{theorem}{Theorem}
\newtheorem{lemma}{Lemma}
\newtheorem{remark}{Remark}
\newtheorem{assumption}{Assumption}
\title{Asymptotic stability of small solitary waves for nonlinear Schr\"odinger equations with electromagnetic potential in $ \RR^3$}
\author{Eva Koo \thanks{Department of Mathematics, The University of British Columbia, Vancouver} \\ evahk@math.ubc.ca}
\date{}
\def\aa{\ll x \rr^{-\sigma}}
\def\invaa{\ll x \rr^{\sigma}}
\def\invsqaa{\ll x \rr^{2\sigma}}
\def\rr{\rangle}
\def\ll{\langle}
\def\ee{\epsilon}
\def\dd{\delta}
\def\EE{\mathcal{E}}
\def\RR{\mathbb{R}}
\def\CC{\mathbb{C}}
\def\pp{\partial}
\def\HH{\mathcal{H}}
\begin{document}
\maketitle       

\begin{abstract}
We consider the nonlinear magnetic Schr\"odinger equation 
for $ u: \mathbb{R}^3 \times \mathbb{R} \to \mathbb{C} $,
\[ iu_t = (i \nabla + A)^2 u + V u + g(u) , u(x,0) = u_0(x),\]
where $ A :\mathbb{R}^3 \to \mathbb{R}^3 $ is the magnetic potential, 
$ V : \mathbb{R}^3 \to \mathbb{R} $ is the electric potential, 
and $ g = \pm | u |^2 u $ is the nonlinear term. We show that under 
suitable assumptions on the electric and magnetic potentials, if the initial data is 
small enough in $ H^1 $, then the solution of the above equation 
decomposes uniquely into a standing wave part, which converges as $t \to \infty$ and a dispersive part, which scatters.
\end{abstract}

\section{Introduction}    
Consider the nonlinear Schr\"odinger equation with magnetic and electric potentials for $ \psi(x,t): \RR^3 \times \RR \rightarrow \CC$, 
\begin{equation} \label{E1}
\left \{
 \begin{array}{lr}
 i \pp_t \psi  = (- \Delta + 2 i A \cdotp \nabla + i(\nabla \cdotp A) + V)\psi + g(\psi) \\
 \psi(x,0) = \psi_0(x) \in H^1(\RR^3)
 \end{array}
 \right.
\end{equation} 
where
\begin{equation} g(\psi) = \pm | \psi |^2 \psi .\end{equation}
Here, $A(x) = (A_1(x), A_2(x), A_3(x)) :\RR^3 \rightarrow \RR^3 $ and $ V(x): \RR^3 \rightarrow \RR $.
Equation (\ref{E1}) can be equivalently written as
\begin{equation}
 i \pp_t \psi = (i\nabla + A)^2 \psi + V \psi + g(\psi) 
\end{equation}
by replacing $V$ with $V - |A|^2$.
Here, $A(x) = (A_1(x), A_2(x), A_3(x))$ is the magnetic potential (also known as the vector potential), and $ V(x) $ is the electric potential (also known as the scalar potential). In this paper, we consider potentials $A(x)$ and $V(x)$ which decay to 0 as $|x| \to \infty$.

Equation (\ref{E1}) describes a charged quantum particle subject to external electric and magnetic fields, and a self-interaction (nonlinearity).
Such nonlinear Schr\"odinger equations find numerous physical applications, for example, in Bose-Einstein condensates and nonlinear optics. 

Just as for linear Schr\"odinger equations 
\begin{equation}
 i \pp_t \psi  = (- \Delta + 2 i A \cdotp \nabla + i(\nabla \cdotp A) + V)\psi ,
\end{equation}
an important role is played by standing wave solutions (or bound states)
\begin{equation} \label{Esol} 
\psi(x,t) = e^{iEt} Q(x) 
\end{equation}
of (\ref{E1}). Existence of standing waves to equation (\ref{E1}) for certain electrical and magnetic potentials was first proved in \cite{EL}.

Here we consider small solutions of the form (\ref{Esol}) which bifurcate from zero along an eigenvalue of the linear Hamiltonian operator
\begin{equation} 
  H = - \Delta + 2 i A \cdotp \nabla + i (\nabla \cdotp A) + V . 
\end{equation}
Physical intuition suggests that the ground-state standing wave (the one corresponding to the lowest eigenvalue $E$) should remain stable when the self-interaction (nonlinearity) is turned on, and indeed should become  \emph{asymptotically stable} (that is, nearby solutions should relax to the ground state by radiating excess energy to infinity -- see below for a more precise statement). When only one bound state is present, this was first proved in \cite{SW} for scalar potentials ($A \equiv 0$) and well-localized perturbations of the ground state. Later works addressed the more complicated situation of multiple bound states (e.g. \cite{TY} ,\cite{SW2}). For merely energy-space (i.e. $H^1(\RR^3)$) perturbations of the ground state, asymptotic stability was proved in \cite{GNT}, again for scalar potential ($A \equiv 0$). The main goal of the present paper is to prove asymptotic stability of the ground state, in the energy space, and in the additional presence of the magnetic field.

\begin{remark}
Our argument should also go through for nonlinearities $g(\psi) = \pm |\psi|^{p-1} \psi$ for $1 + 4/3 \leq p < 5$. For concreteness, we will work with $g(\psi) = \pm |\psi|^2 \psi$.
\end{remark}

In order to ensure the operator $H$
is self adjoint, we make the following assumption,
\begin{assumption} \label{self-adjoint} (Self-adjointness assumption)
We assume that each component of $A$ is a real-valued function in $ L^q + L^{\infty} $ for some $ q > 3 $ that $ \nabla \cdotp A \in L^2 + L^{\infty} $, and that $V$ is a real-valued function in $ L^2 + L^{\infty} $.
\end{assumption}
Then by Theorem X.22 of \cite{RSvol2}, the operator $ H $ is essentially self-adjoint on $ C_0^{\infty}(\RR^3)$.

\begin{assumption} \label{a2} (Spectral assumption)
We assume that $ H $ supports only one eigenvalue $ e_0 < 0 $ which is nondegenerate. We also assume 0 is not a resonance of $H$ (see e.g. \cite{EGS} for the definition of resonance).
\end{assumption}

We need the following assumption to show the existence and exponential decay of the nonlinear bound states.
\begin{assumption} \label{a_decay} (Assumptions for existence and exponential decay of nonlinear bound states)
We assume 
\begin{equation}  
  \| A \|_{L^q+L^\infty(|x| > R)} + \| V_{-} \|_{L^2+L^\infty(|x| > R)} \rightarrow 0  \; \text{as} \; R \rightarrow \infty  
\end{equation}
for some $ q > 3$.
\end{assumption}

Under the above assumptions, we have the following lemma on the existence and decay of nonlinear bound states.
Let $ \phi_0 > 0 $ be the positive, $ L^2 $-normalized eigenfunction corresponding to the eigenvalue $ e_0 $ of $H$.

\begin{lemma} \label{l1}
(Existence and decay of nonlinear bound states) For each sufficiently small $ z \in \CC $, there is a corresponding eigenfunction $Q[z] \in H^2 $ solving the nonlinear eigenvalue problem 
\begin{equation} \label{eNEP}
 HQ + g(Q) = EQ 
\end{equation}
with the corresponding eigenvalue $E[z] = e_0 + o(z) $ and $ Q[z] = z \phi_0 + q(z)$ with
\begin{equation} 
  q(z) = o(z^2), \;\;\;\; DQ[z] = (1,i) \phi_0 + o(z) \;\; \text{and} \;\; D^2Q[z] = o(1) \;\;\; \text{in} \;\; H^2 
\end{equation}
where we denote
\begin{equation}
  DQ[z] = (D_1 Q[z],D_2 Q[z]) = (\frac{\partial}{\partial z_1}Q[z],\frac{\partial}{\partial z_2}Q[z]), \;\; \text{and} \;\; z = z_1 + i z_2 .
\end{equation}

Furthermore, $ Q $ has exponential decay in the sense that
\begin{equation}
  e^{\beta |x|} Q \in H^1 \cap L^{\infty} 
\end{equation}
for some $ \beta > 0$ (independent of $z$).
\end{lemma}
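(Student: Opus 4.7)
The plan is to construct $Q[z]$ by a Lyapunov--Schmidt / implicit function reduction around the ground state $\phi_0$, and then to obtain the exponential decay by an Agmon-type weighted energy estimate. I write $Q = z\phi_0 + q$, $E = e_0 + \lambda$, and impose the orthogonality $\int q\,\bar\phi_0\,dx = 0$ (one complex, i.e.\ two real, conditions). Substituting into \eqref{eNEP} and using $H\phi_0 = e_0\phi_0$ gives
\begin{equation*}
(H - e_0)q + g(z\phi_0 + q) = \lambda(z\phi_0 + q).
\end{equation*}
Let $P$ be the $L^2$-projection onto $\mathrm{span}_\CC\{\phi_0\}$ and $P_c = I - P$. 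Projecting splits this into a scalar equation $\lambda z = \langle g(Q),\phi_0\rangle$ determining $\lambda$ and an orthogonal equation $(H - e_0) q = P_c[\lambda q - g(Q)]$ for $q$; by Assumption \ref{a2}, $e_0$ is isolated in $\sigma(H)$, so $(H - e_0)^{-1}P_c$ is bounded from $L^2$ to $H^2$.

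Substituting the scalar relation into the orthogonal one produces a map $\mathcal{F}(\,\cdot\,;z) : P_c H^2 \to P_c H^2$ whose fixed points are the nonlinear bound states perpendicular to $\phi_0$. Because $g$ is cubic and $H^2(\RR^3) \hookrightarrow L^\infty$, for $|z|$ sufficiently small $\mathcal{F}(\,\cdot\,;z)$ contracts a ball of radius $C|z|^3$ in $P_c H^2$, yielding a unique $q = q(z)$ of size $O(|z|^3) = o(|z|^2)$ in $H^2$; the scalar equation then gives $\lambda(z) = \langle g(Q),\phi_0\rangle/z = O(|z|^2)$, and reality of $\lambda$ follows from self-adjointness of $H$ and the gauge covariance $Q[e^{i\theta}z] = e^{i\theta}Q[z]$ (forced by uniqueness). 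Since $\mathcal{F}$ is smooth in $(z_1,z_2,q)$ (polynomial in $z, \bar z, q, \bar q$), the implicit function theorem gives $q \in C^2$ in $(z_1, z_2)$; differentiating the fixed-point equation and using $q = O(|z|^3)$ yields the stated expansions $DQ[z] = (1,i)\phi_0 + o(z)$ and $D^2Q[z] = o(1)$ in $H^2$.

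For the exponential decay, fix $\beta > 0$ with $\beta^2 < |E[z]|/2$ (possible because $E[z] \to e_0 < 0$). Using $HQ = EQ - g(Q)$, pair with $w_R^2\,\bar Q$, where $w_R$ is a smooth bounded approximation of $e^{\beta|x|}$ with $w_R \uparrow e^{\beta|x|}$ as $R \to \infty$, and integrate by parts. The kinetic term yields $\int |\nabla(w_R Q)|^2 - \beta^2 \int |w_R Q|^2$ modulo commutator contributions supported on a fixed annulus, and combined with $-E\int |w_R Q|^2$ gives a coercive lower bound of size $(|E|-\beta^2)\|w_R Q\|_{L^2}^2 + \|w_R\nabla Q\|_{L^2}^2$. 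On the right, the magnetic, electric, and nonlinear terms are split into an interior piece (uniformly bounded in $R$) and an exterior piece; the latter is absorbable using Assumption \ref{a_decay} together with $\|Q\|_{L^\infty} = O(|z|)$. Passing $R\to\infty$ yields $e^{\beta|x|}Q \in H^1$, and feeding this back into the equation together with Sobolev $H^2\hookrightarrow L^\infty$ upgrades to $e^{\beta|x|}Q \in L^\infty$. The main obstacle is this weighted estimate, specifically controlling the non-self-adjoint first-order magnetic term $2iA\cdot\nabla Q$ against the exponential weight; the hypotheses $A \in L^q + L^\infty$ with $q > 3$ (Assumption \ref{self-adjoint}) and $\|A\|_{L^q + L^\infty(|x|>R)} \to 0$ (Assumption \ref{a_decay}) are precisely what allow it to be absorbed into the kinetic energy via H\"older and Sobolev embedding.
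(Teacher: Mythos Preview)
Your proposal is correct and follows essentially the same strategy as the paper: a Lyapunov--Schmidt reduction with contraction mapping for existence (the paper runs the contraction on the pair $(q,e')\in P_cH^2\times\RR$ rather than eliminating $e'$ first, but this is cosmetic), and an Agmon-type weighted energy estimate for the exponential decay (the paper uses a compactly supported bump weight $\chi_{R,\ee}$ together with an auxiliary summation lemma instead of your monotone approximant $w_R$, but the mechanism is the same). One minor point: for the $L^\infty$ upgrade the paper works in $W^{2,\frac{3}{2}+}$ with a reduced exponent $\gamma=\beta/3$ rather than in $H^2$, so as to handle $A\in L^q+L^\infty$ without assuming $A$ bounded; your sketch via $H^2\hookrightarrow L^\infty$ glosses over bounding $e^{\beta|x|}A\cdot\nabla Q$ in $L^2$ and needs this tweak, but is otherwise fine.
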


Next, we need assumptions on $A$ and $V$ which ensure our linear Schr\"odinger evolution obeys some dispersive estimates.
For $ f, g \in L^2(\RR^3, \CC) $, define the real inner product $ \ll f, g \rr $ by
\begin{equation} \ll f, g \rr = \Re ( \int_{\RR^3} \bar{f}g dx ) . \end{equation}
Denote $ \ll x \rr = (1 + |x|^2)^{\frac{1}{2}} $ and fix $ \sigma > 4$. 
Let $P_c$ be the projection onto the continuous spectral subspace of $H$. Following \cite{EGS}, we have:
\begin{assumption} \label{a1} (Strichartz estimates assumption)
We assume that for all $ x, \xi \in \RR^3$,
\begin{equation} |A(x)| + \ll x \rr |V(x)| \lesssim \ll x \rr^{-1-\ee} , \end{equation}
\begin{equation} \ll x \rr^{1+\ee'} A(x) \in \dot{W}^{\frac{1}{2},6}(\RR^3) , \end{equation}
and 
\begin{equation} A \in C^0(\RR^3) \end{equation}
for some $ \ee > 0 $ and all sufficiently small $\ee' \in (0,\ee)$.
\end{assumption}

Define the space-time norm
\begin{eqnarray}
  \| \psi \|_X  
  & = & \| \aa \psi \|_{L_t^2 H_x^1} + \| \psi \|_{L_t^{3} W_x^{1,\frac{18}{5}}} + \| \psi \|_{L_t^{\infty} H_x^1}.
\end{eqnarray}
We can now state the main result, which says that all $H^1$-small solutions converge to a solitary wave (nonlinear bound state) as $t \to \infty$:

\begin{theorem} \label{t1} (Asymptotic stability of small solitary waves)
Let assumptions \ref{self-adjoint}, \ref{a2}, \ref{a_decay} and \ref{a1} hold.
For $0 \leq t < \infty$, every solution $ \psi $ of equation \eqref{E1} with initial data $ \psi_0 $ sufficiently small in $ H^1 $ can be uniquely decomposed as
\begin{equation} \psi(t) = Q[z(t)] + \eta(t) , \end{equation}
with differentiable $ z(t) \in \CC $ and $ \eta(t)  \in  H^1 $ satisfying $ \ll i \eta, D_1 Q[z] \rr = 0$, $ \ll i \eta, D_2 Q[z] \rr = 0 $
and
\begin{equation} \| \eta \|_{X} \lesssim \| \psi_0 \|_{H^1} , \;\;\;\; \| \dot{z} + i E[z] z \|_{L^1_t} \lesssim \| \psi_0 \|_{H^1}^2 . \end{equation}

Furthermore, as $ t \rightarrow \infty $,
\begin{equation} z(t)\exp(i \int_0^t E[z(s)] ds) \rightarrow z_+, \;\; E(z(t)) \rightarrow E(z_+) \end{equation}
for some $ z_+ \in \CC $ and
\begin{equation} \| \eta(t) - e^{-itH} \eta_+ \|_{H_x^1} \rightarrow 0 \end{equation}
for some $ \eta_+ \in H_x^1 \cap \text{Range}(P_c)$.
\end{theorem}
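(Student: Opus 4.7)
My plan is to follow the Gustafson-Nakanishi-Tsai framework, adapted to the magnetic setting: introduce a modulation ansatz, derive coupled equations for $(z,\eta)$, close a bootstrap in the Strichartz-type space $X$, and then extract asymptotics. The nonlinear ground-state family $\{Q[z]\}$ is a two-dimensional real manifold tangent at $z=0$ to $\mathrm{span}_\RR\{\phi_0, i\phi_0\}$ by Lemma \ref{l1}. The map $(z,\eta) \mapsto Q[z]+\eta$ restricted to the codimension-two slice $\{\ll i\eta, D_j Q[z]\rr = 0,\ j=1,2\}$ has invertible linearization at the origin (the $2\times 2$ Gram matrix is a small perturbation of $\|\phi_0\|_{L^2}^2\,I$), so the implicit function theorem supplies, for $\psi$ small in $H^1$, a unique decomposition $\psi = Q[z]+\eta$ with $|z(t)| + \|\eta(t)\|_{H^1}\lesssim \|\psi(t)\|_{H^1}$ and $z$ differentiable in time.

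\textbf{Modulation equations and control of $z$.} Substituting the ansatz into \eqref{E1} and using \eqref{eNEP} to eliminate $HQ+g(Q)$ gives
\[ i\eta_t = H\eta + E[z]\,Q[z] - i\,DQ[z]\cdot\dot z + \bigl(g(Q+\eta) - g(Q)\bigr). \]
Setting $\zeta := \dot z + iE[z]z$ and using $Q[z] = z\phi_0 + o(z^2)$, $DQ[z] = (1,i)\phi_0 + o(z)$ from Lemma \ref{l1}, the modulation piece consolidates into $-i\,DQ[z]\cdot\zeta$ plus an $O(|z|)\zeta$ remainder. Differentiating the orthogonality conditions in $t$, inserting this equation, and invoking the identity obtained by differentiating \eqref{eNEP} in $z_j$, I obtain a $2\times 2$ near-identity linear system for $\zeta$ whose right-hand side, after using $\ll i\eta, D_j Q\rr = 0$, is bounded by $\|\aa\eta\|_{H^1}^2$. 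This yields the pointwise bound $|\dot z + iE[z]z|\lesssim \|\aa\eta\|_{H^1}^2$ and hence $\|\dot z + iE[z]z\|_{L^1_t}\lesssim \|\eta\|_X^2$.

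\textbf{Dispersive bootstrap and asymptotics.} From Duhamel,
\[ \eta(t) = e^{-itH}P_c\eta(0) + e_d(t) - i\int_0^t e^{-i(t-s)H}P_c F(s)\,ds, \]
where $e_d$ is the discrete-spectrum component (controlled by the orthogonality conditions together with the exponential decay of $\phi_0$) and $F = -i\,DQ[z]\cdot\zeta + (g(Q+\eta)-g(Q))$. I then apply the Strichartz and weighted local smoothing estimates for $e^{-itH}P_c$, which hold under Assumption \ref{a1} by \cite{EGS}, measured in $X$. The cubic part of $g(Q+\eta)-g(Q)$ is handled in $L^3_t W^{1,18/5}_x$ by Sobolev embedding and contributes $O(\|\eta\|_X^3)$; the pieces quadratic and linear in $\eta$ carry coefficients $O(|z|)$ and $O(|z|^2)$ and are absorbed by smallness of $z$; the source $DQ\cdot\zeta$ contributes $\lesssim\|\eta\|_X^2$ through $\|\zeta\|_{L^1_t}$. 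Combining yields $\|\eta\|_X \lesssim \|\psi_0\|_{H^1} + \|\eta\|_X^2$, which closes a continuity bootstrap for small data. Finiteness of $\|\dot z + iE[z]z\|_{L^1_t}$ then makes $z(t)\exp\bigl(i\!\int_0^t E[z(s)]\,ds\bigr)$ Cauchy with limit $z_+$, whence $E[z(t)]\to E[z_+]$ by continuity of $E[\cdot]$; and Cook's method applied to the Duhamel formula shows $e^{itH}P_c\eta(t)$ is Cauchy in $H^1$, giving the scattering conclusion.

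\textbf{Main obstacle.} The principal technical difficulty relative to the scalar-potential case \cite{GNT} is running the whole scheme at the $H^1$ level in the presence of the magnetic drift $2iA\cdot\nabla$ inside $H$. Because $[H,\nabla]$ no longer reduces to a multiplication operator, controlling $\nabla\eta$ via Strichartz requires commuting a derivative past the magnetic propagator and reabsorbing the resulting lower-order terms through the weighted norm $\|\aa\eta\|_{L^2_t H^1_x}$; this is exactly why Assumption \ref{a1} requires the regularity $\ll x\rr^{1+\ee'}A \in \dot W^{1/2,6}$ rather than decay alone. Verifying that both the modulation remainder and the cubic nonlinearity fit cleanly into the $H^1$-level dual Strichartz space in the presence of these magnetic commutators is the step that demands the most care.
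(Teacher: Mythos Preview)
Your overall architecture --- modulation decomposition, coupled $(z,\eta)$ equations, bootstrap in $X$, then Cook's method --- is exactly the paper's. The divergence is in what you flag as the ``main obstacle.'' The paper's actual obstruction is not the commutator $[H,\nabla]$ but the \emph{loss of the endpoint Strichartz pair} $(q,p)=(2,6)$: the \cite{EGS} inhomogeneous estimates go through Christ--Kiselev, which fails at the endpoint, and in \cite{GNT} that endpoint was precisely what absorbed the linear-in-$\eta$ source terms $|Q|^2\eta$. The paper's remedy is Theorem~\ref{t7-2}: the weighted norm $\|\aa\,\cdot\,\|_{L^2_t H^1_x}$ stands in for the missing endpoint, and the $|Q|^2\eta$ and $DQ\cdot\zeta$ terms are placed in the weighted dual $\|\invaa\,\cdot\,\|_{L^2_t H^1_x}$ using the exponential decay of $Q$ (this is why $\|\dot z + iEz\|_{L^2_t}$, not only $L^1_t$, is estimated in the proof).

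For the $H^1$-level estimates the paper does \emph{not} commute $\nabla$ past $e^{-itH}$ and reabsorb. Instead it proves the equivalences $\|\phi\|_{W^{1,p}}\sim\|H_1^{1/2}\phi\|_{L^p}$ (Lemma~\ref{l7-2a}, via bounded imaginary powers \cite{MGC} and complex interpolation \cite{HT}) and $\|\phi\|_{\ll x\rr^s H^1}\sim\|H_1^{1/2}\phi\|_{\ll x\rr^s L^2}$ (Lemma~\ref{l7-2b}, via resolvent-commutator expansion of $[\ll x\rr^s,H_1^{-1/2}]$). Since $H_1^{1/2}$ commutes exactly with $e^{-itH}$, the derivative Strichartz and weighted estimates then follow for free from the $L^2$-level ones. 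Your proposed route through $[H,\nabla]=2i(\nabla A)\cdot\nabla+\text{(zeroth order)}$ would need pointwise decay of $\nabla A$ to reabsorb the first-order piece, and Assumption~\ref{a1} only supplies half a derivative ($\ll x\rr^{1+\ee'}A\in\dot W^{1/2,6}$); so as written that mechanism may not close under the stated hypotheses, whereas the $H_1^{1/2}$ route sidesteps the issue entirely.
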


For comparison, consider the nonlinear Schr\"odinger equation with just a scalar potential $ V $,
\begin{equation} \label{E2}
 i \pp_t \psi = (- \Delta + V)\psi + g(\psi)
\end{equation}
for the same nonlinearity $ g $ as above, which is a special case of equation (\ref{E1}) with $ A = 0 $. The corresponding asymptotic stability result for (\ref{E2}) was obtained in dimension three in \cite{GNT}, in dimension one in \cite{TM1} and in dimension two in \cite{TM2, KZ}. Our approach for equation (\ref{E1}) will be similar to that in \cite{GNT}, which uses the Strichartz 
estimates
\begin{equation} \| e^{it(\Delta - V)} P_c \phi \|_{\tilde{X}} \lesssim \| \phi \|_{H^1} \end{equation}
and 
\begin{equation} \| \int_{-\infty}^t e^{i(t-s)(\Delta - V)} P_c F(s) ds \|_{\tilde{X}} \lesssim \| F \|_{L_t^2 W^{1,\frac{6}{5}}} \end{equation}
where $ \tilde{X} = L_t^{\infty} H^1 \cap L_t^2 W^{1,6} \cap L_t^2 L^{6,2} $, which are known to hold for a class of scalar potentials $ V $. Our approach will use the Strichartz estimates for $ H $ from \cite{EGS}. However, the proof of \cite{EGS} of the inhomogeneous Strichartz estimates
\begin{equation} \| \int_{-\infty}^t e^{i(t-s)H} P_c F(s) ds \|_{L_t^qL_x^p} \lesssim \| F \|_{L_t^{\tilde{q}'} L_x^{\tilde{p}'}} \end{equation}
for $ H = - \Delta + 2 i A \cdotp \nabla + i(\nabla \cdotp A) + V $ uses a lemma from \cite{MK} which does not hold for the endpoint case $ (q,p) = (2,6)$ or $(\tilde{q}, \tilde{p}) = (2,6) $. To overcome the lack of endpoint Strichartz estimates, we will use estimates in weighted spaces, as in \cite{TM1} and \cite{TM2}.

Section \ref{Linear estimates} is devoted to the proofs of the various linear dispersive estimates needed for the asymptotic stability argument. In addition to the estimates taken from \cite{EGS}, we need to establish estimates in weighted Sobolev spaces, which require some work. We will prove the following theorem.
\begin{theorem} \label{t7-2}
We say that $ (p,q)$ is Strichartz admissible if 
\begin{equation} \frac{2}{q} + \frac{3}{p} = \frac{3}{2} \;\;\; \text{with} \;\;\; 2 \leq p < 6 . \end{equation}
If $(q,p)$ and $(\tilde{p},\tilde{q}) $ are Strichartz admissible, then
\begin{eqnarray} 
  &       & \| \int_{0}^{t} e^{i(t-s)H} P_c F(s) ds \|_{L_t^q W_x^{1,p}} + \| \aa \int_{0}^{t} e^{i(t-s)H} P_c F(s) ds \|_{L_t^2 H_x^1} \\
  & \lesssim & \min ( \| \ll x \rr^{\sigma} F \|_{L_t^2 H_x^1} , \| F \|_{L_t^{\tilde{q}'} W_x^{1,\tilde{p}'}} ) . \end{eqnarray}
\end{theorem}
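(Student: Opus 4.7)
The plan is to build Theorem \ref{t7-2} out of two homogeneous estimates for the propagator $e^{itH}P_c$, combined through duality and the Christ-Kiselev lemma. The building blocks are (i) the Strichartz estimate $\|e^{itH}P_c\phi\|_{L_t^q W_x^{1,p}} \lesssim \|\phi\|_{H^1}$ for non-endpoint admissible $(q,p)$, which is essentially already in \cite{EGS}, and (ii) a weighted local-smoothing estimate
\begin{equation*}
\|\aa\, e^{itH}P_c\phi\|_{L_t^2 H_x^1} \lesssim \|\phi\|_{H^1}.
\end{equation*}
Estimate (ii) plays the role that the forbidden $L_t^2 L_x^6$ Strichartz endpoint plays in the scalar-potential analysis of \cite{GNT}; it is the source of the weighted $L_t^2 H_x^1$-term on the left-hand side of the theorem.

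For estimate (ii), the $L^2$ version $\|\aa\, e^{itH}P_c\phi\|_{L_t^2 L_x^2} \lesssim \|\phi\|_{L^2}$ is Kato smoothing for the magnetic Hamiltonian, and follows from the limiting absorption principle for $H$ in weighted $L^2$, which is guaranteed by Assumptions \ref{a2} and \ref{a1} (no resonance at zero together with $|A(x)| + \ll x\rr |V(x)| \lesssim \ll x\rr^{-1-\ee}$). Promoting to $H^1$ is done using the almost-equivalence of Sobolev norms defined via $\nabla$ and via $H^{1/2}$ on $P_c L^2$, valid under Assumption \ref{a1}; then $H^{1/2}$ commutes with $e^{itH}$, and the $H^1$-smoothing reduces to the $L^2$ case. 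Taking adjoints of (i) and (ii) (valid since $H$ is self-adjoint by Assumption \ref{self-adjoint}) yields the dual bounds
\begin{equation*}
\Big\|\int e^{-isH}P_c G\,ds\Big\|_{H^1} \lesssim \|G\|_{L_t^{\tilde q'} W_x^{1,\tilde p'}}, \qquad
\Big\|\int e^{-isH}P_c G\,ds\Big\|_{H^1} \lesssim \|\invaa G\|_{L_t^2 H_x^1}.
\end{equation*}

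Composing each of (i), (ii) with each dual through $e^{itH}P_c$ produces four untruncated inhomogeneous estimates with $\int_{\RR}$, covering every combination of the two input norms and the two output norms appearing in the theorem. The retarded integral $\int_0^t$ is recovered from $\int_\RR$ via the Christ-Kiselev lemma in every case \emph{except} the one where both time exponents equal $2$, i.e.\ the weighted-in/weighted-out bound; handling this single case is the main technical obstacle of the proof. I would treat it directly through the resolvent of $H$: expressing $e^{i(t-s)H}P_c$ through Stone's formula and applying the limiting absorption principle for $\aa (H-\lambda\pm i0)^{-1}\invaa$ uniformly in $\lambda$ on the continuous spectrum (available under Assumptions \ref{a2} and \ref{a1}) reduces the retarded weighted-to-weighted bound to that uniform resolvent estimate, following the strategy of \cite{TM1, TM2}. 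Taking the minimum of the four resulting bounds then yields Theorem \ref{t7-2}.
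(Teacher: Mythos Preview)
Your overall strategy matches the paper's: non-endpoint Strichartz and Kato smoothing from \cite{EGS} as homogeneous inputs, duality plus Christ--Kiselev to produce the mixed inhomogeneous bounds, and a direct resolvent/Plancherel argument for the one case (weighted-in, weighted-out) where both time exponents are $2$ and Christ--Kiselev is unavailable. That last step is exactly the content of the paper's Lemma~\ref{l6}.

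There is, however, one genuine technical point you have glossed over. When you write that ``promoting to $H^1$ is done using the almost-equivalence of Sobolev norms defined via $\nabla$ and via $H^{1/2}$'', this is fine for the unweighted Strichartz block (i), and indeed the paper proves $\|\phi\|_{W^{1,p}}\sim\|H_1^{1/2}\phi\|_{L^p}$ (Lemma~\ref{l7-2a}) by complex interpolation. But for the weighted smoothing block (ii) and for the weighted half of the inhomogeneous estimate, what you actually need is the \emph{weighted} equivalence
\[
\|\phi\|_{\ll x\rr^{s} H^1}\ \sim\ \|H_1^{1/2}\phi\|_{\ll x\rr^{s} L^2},
\]
since the weight does not commute with $H_1^{1/2}$. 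This is the content of the paper's Lemma~\ref{l7-2b}, and it is not free: it requires writing $H_1^{-1/2}=\int_0^\infty t^{-1/2}(H_1+t)^{-1}\,dt$, computing iterated commutators $[H_1+t,\ll x\rr^{s}]$, and controlling $\|\nabla(H_1+t)^{-1}\|_{L^2\to L^2}\lesssim(1+t)^{-1/2}$ uniformly (Lemmas~\ref{l7-2b-0}--\ref{l7-2b-1}). Without this, your step ``$H^{1/2}$ commutes with $e^{itH}$, and the $H^1$-smoothing reduces to the $L^2$ case'' does not close, because after commuting $H_1^{1/2}$ through the propagator you still have to pass it through $\aa$ on the outside.

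A related organizational point: the paper first proves the full $L^2$-level inhomogeneous estimate (Lemma~\ref{l7}) and only then conjugates by $H_1^{1/2}$, using the unweighted and weighted norm equivalences on the two sides. Your ordering---upgrade each homogeneous estimate to $H^1$ first, then dualize---forces you to take adjoints in $H^1$ rather than $L^2$, which is awkward (the dual of your (ii) as stated lands in $H^{-1}$, not $H^1$). Following the paper's order avoids this entirely.
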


The asymptotic stability theorem is proved in section \ref{secMain}. Finally, the existence and decay of nonlinear bound states (Lemma \ref{l1}) is given in an appendix.

\section{Linear estimates} \label{Linear estimates}
The following lemmas \ref{l3} and \ref{l4} are from \cite{EGS}:

\begin{lemma} \label{l3} (Non-endpoint Strichartz estimates)
Under assumptions \ref{a1} and \ref{a2}, if $ (p,q)$ and $(\tilde{p},\tilde{q}) $ are Strichartz admissible, we have
\begin{equation} \| e^{itH} P_c f \|_{L_t^q L_x^p} \lesssim \| f \|_{L^2(\RR^3)} \end{equation}
and 
\begin{equation} \| \int_{-\infty}^{t} e^{i(t-s)H} P_c F(x) ds \|_{L_t^q L_x^p} \lesssim \| F \|_{L_t^{\tilde{q}'} L_x^{\tilde{p}'} } . \end{equation} 
\end{lemma}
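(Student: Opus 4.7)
The plan is to follow the standard route to Strichartz estimates for a self-adjoint Schrödinger operator: combine a pointwise dispersive decay estimate for $e^{itH}P_c$ with the Keel-Tao abstract machinery.

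The analytical core is the dispersive estimate
\[
\|e^{itH}P_c f\|_{L^\infty_x} \lesssim |t|^{-3/2}\|f\|_{L^1_x}.
\]
Because $A\cdot\nabla$ is a first-order perturbation, a direct Born-series expansion off the free propagator does not close. I would instead work at the resolvent level, using the Stone formula
\[
e^{itH}P_c f = \frac{1}{2\pi i}\int_0^\infty e^{it\lambda}\bigl[R_H(\lambda+i0)-R_H(\lambda-i0)\bigr]P_c f\,d\lambda,
\]
and decompose the integral into high- and low-energy pieces. The low-energy piece uses Assumption \ref{a2} (the absence of a zero resonance and simplicity of $e_0$) to control the weighted resolvent as $\lambda \to 0^+$. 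The high-energy piece rests on Kato smoothing bounds of the form $\|\aa R_H(\lambda\pm i0)\aa\|_{L^2\to L^2}\lesssim \ll\lambda\rr^{-1/2}$, summed through a Littlewood-Paley decomposition in $\lambda$ and matched with stationary phase; here the decay and regularity in Assumption \ref{a1} are essential, and the hypothesis $\ll x\rr^{1+\ee'}A\in \dot{W}^{1/2,6}$ is what allows one to absorb the half-derivative loss coming from the first-order term.

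Once the $L^1\to L^\infty$ dispersive bound is available, the $L^2\to L^2$ unitarity of $e^{itH}$ and the Keel-Tao abstract theorem immediately yield both the homogeneous and the inhomogeneous Strichartz estimates for every non-endpoint admissible pair $(q,p)$ and $(\tilde q,\tilde p)$ with $2\leq p,\tilde p<6$. Composition with $P_c$ causes no trouble since $P_c$ commutes with $H$ and is bounded on every $L^p$ in the relevant range. This step is abstract and indifferent to the precise form of $H$; the endpoint $p=6$ is excluded precisely because the bilinear Keel-Tao refinement requires an extra input, which is exactly the gap inherited from \cite{MK} that later motivates the weighted estimates of Theorem \ref{t7-2}.

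The main obstacle is the dispersive estimate in the presence of the first-order magnetic term: extracting $|t|^{-3/2}$ pointwise decay from a resolvent integral that does not admit a Born expansion requires careful bookkeeping of the weighted resolvent bounds and their $\lambda$-dependence over both energy regimes, which is where Assumptions \ref{a1} and \ref{a2} are used in full. Once the pointwise decay is in hand, the passage to the non-endpoint Strichartz family is essentially mechanical.
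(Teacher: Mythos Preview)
The paper does not prove this lemma; it simply quotes it from \cite{EGS}. So the relevant comparison is between your sketch and the argument actually in \cite{EGS}.

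Your route---pointwise $L^1\to L^\infty$ dispersion followed by Keel--Tao---is not what \cite{EGS} does, and there is a genuine gap: under Assumption~\ref{a1} (almost-critical decay $|A|\lesssim\ll x\rr^{-1-\ee}$ and only half a derivative of regularity on $A$), a pointwise $|t|^{-3/2}$ dispersive bound for the magnetic propagator is \emph{not known}. The first-order term $2iA\cdot\nabla$ is too strong for a resolvent Born expansion to close in the unweighted $L^1\to L^\infty$ topology, and the weighted resolvent bounds you invoke do not by themselves upgrade to pointwise decay without extra smallness or structure on $A$. There is an internal consistency check here: if your route worked, Keel--Tao would immediately deliver the endpoint $(q,p)=(2,6)$ as well, whereas the lemma deliberately excludes it. That exclusion is a signal that the actual proof does not pass through pointwise dispersion.

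The argument in \cite{EGS} goes the other way around. One first proves the Kato smoothing/weighted resolvent estimates (Lemma~\ref{l4} in this paper) directly, then writes $e^{itH}$ via Duhamel as a perturbation of the free evolution $e^{-it\Delta}$ with source terms $A\cdot\nabla u$, $(\nabla\cdot A)u$, $Vu$, and uses smoothing on both the input and output sides to transfer the free Strichartz estimates to $H$. The Christ--Kiselev lemma from \cite{MK} is what converts the full-time Duhamel integral to the retarded one, and it is exactly this step that breaks at $L^2_t$ on both sides---hence the restriction $p<6$ in the statement, and the need for the weighted substitutes developed later in Theorem~\ref{t7-2}.
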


Notice that the above does not include the $L_t^2$-norm. Fix $\sigma > 4$.

\begin{lemma} \label{l4} (Weighted homogeneous $L_t^2$ estimates)
Under assumptions \ref{a1} and \ref{a2}, we have
\begin{equation} \| \ll x \rr^{-\sigma} e^{-itH} f \|_{L^2_tL^2_x} \lesssim \| f \|_{L^2_x} , \end{equation}
and
\begin{equation} 
\sup_{\lambda \geq 0} \ll \lambda \rr \| \ll x \rr^{-\sigma} (H - (\lambda^2 + i 0))^{-1} \ll x \rr^{-\sigma} \|_{L^2 \rightarrow L^2} \lesssim 1 .
\end{equation}
\end{lemma}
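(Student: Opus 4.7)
Since the lemma is quoted directly from \cite{EGS}, the proof in the paper will presumably just be that citation; the plan below sketches how such a result is standardly obtained. The two estimates are two sides of one coin: by Stone's formula and Plancherel in $t$, the $L^2_t$ local smoothing bound is equivalent to Kato $\aa$-smoothness of $H$ on its continuous subspace, which in turn reduces to a uniform bound on $\aa \, \mathrm{Im}\, R_H(\lambda^2 + i0)\, \aa$ in $L^2 \to L^2$ operator norm. My plan is therefore to establish the resolvent estimate first and then deduce the time-dependent one from it by a direct application of Kato's smoothness theorem.

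For the resolvent bound I would argue perturbatively. The free resolvent $R_0(\lambda^2 + i0) = (-\Delta - \lambda^2 - i0)^{-1}$ obeys the classical Agmon limiting absorption principle $\ll \lambda \rr \|\aa R_0(\lambda^2 + i0) \aa\|_{L^2 \to L^2} \lesssim 1$ for $\sigma > 1/2$, proved via the explicit kernel and a $TT^*$ argument. Writing $H = -\Delta + W$ with $W = 2iA \cdot \nabla + i(\nabla \cdot A) + V$, the second resolvent identity
\begin{equation*}
 R_H = R_0 - R_0 W R_H
\end{equation*}
leads one to study invertibility on $L^2$ of an operator of the form $I + K(\lambda)$, with $K(\lambda)$ incorporating $W$ sandwiched between weighted free resolvents. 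Invertibility is handled by a frequency split: at high frequency $\lambda \geq \Lambda_0$, the $\ll \lambda \rr^{-1}$ decay of the weighted free resolvent, together with $|A| + \ll x \rr |V| \lesssim \ll x \rr^{-1-\epsilon}$ from Assumption \ref{a1}, makes $K(\lambda)$ small in operator norm so that a Neumann series converges. At low frequency $0 \le \lambda \le \Lambda_0$, $K(\lambda)$ is compact (by Rellich-type compactness of $\aa R_0 \aa$), and invertibility of $I + K(\lambda)$ reduces to triviality of its kernel; a nontrivial kernel element would produce either a positive embedded eigenvalue of $H$ (for $\lambda > 0$) or a zero-energy resonance, both ruled out by Assumption \ref{a2}.

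The hardest step is handling the first-order magnetic term $2iA \cdot \nabla$ in $W$. Classical Agmon-Kato-Kuroda theory treats multiplicative perturbations only; accommodating $A \cdot \nabla$ requires uniform control of $\aa R_0(\lambda^2 + i0)\, A \cdot \nabla\, \aa$ on $L^2$, i.e.\ extracting a full derivative from the resolvent while keeping both sides weighted. This is where the hypothesis $\ll x \rr^{1+\epsilon'} A \in \dot W^{1/2,6}$ from Assumption \ref{a1} becomes essential: by Sobolev embedding together with a fractional $H^{1/2}$ smoothing property of $R_0$ on weighted $L^2$, one absorbs half a derivative from $R_0$ and distributes the other half onto $A$. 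Once the weighted resolvent bound is established, Kato's theorem on $H$-smooth operators converts it immediately into the claimed $L^2_t L^2_x$ local smoothing estimate.
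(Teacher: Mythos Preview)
Your anticipation is correct: the paper does not prove Lemma~\ref{l4} at all but simply states it as a result taken from \cite{EGS}, so there is no proof in the paper to compare against. The outline you give of the underlying argument (limiting absorption for the free resolvent, perturbative Fredholm analysis for $I+K(\lambda)$ with the spectral assumption handling low energies, and Kato smoothness to pass to the $L^2_t$ bound) is an accurate summary of the strategy in \cite{EGS}.
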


The weighted resolvent estimate of lemma \ref{l4} implies weighted inhomogeneous estimates for the linear evolution:

\begin{lemma} \label{l6} (Weighted $ L_t^2 $ inhomogeneous estimates)
Under the assumptions of lemma \ref{l4},
\begin{equation} \| \aa \int_{0}^{t} e^{i(t-s)H} P_c \aa F(s) ds \|_{L_t^2L_x^2} \lesssim \| F \|_{L_t^2L_x^2}. \end{equation}
\end{lemma}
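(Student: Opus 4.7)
The plan is to pass to the Fourier side in time and reduce the estimate to the uniform weighted resolvent bound already supplied by Lemma \ref{l4}. I will extend $F$ by zero to $t < 0$; since $F$ vanishes there, the Duhamel integral $u(t) := \int_0^t e^{i(t-s)H} P_c \aa F(s)\, ds$ also vanishes for $t < 0$, and on all of $\RR$ we may write $u = K \ast (P_c \aa F)$ with the causal operator-valued kernel $K(\tau) := e^{i \tau H} \mathbf{1}_{\tau > 0}$. A standard $e^{-\ee\tau}$ absorption computation identifies its temporal Fourier transform as $\hat K(\omega) = i (H - (\omega - i0))^{-1}$, in the sense of operator-valued distributions.

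Since both $\aa$ and $P_c$ commute with the temporal Fourier transform, Plancherel in $t$ converts the desired inequality $\|\aa u\|_{L^2_tL^2_x} \lesssim \|F\|_{L^2_tL^2_x}$ into a pointwise-in-$\omega$ one. It therefore suffices to establish the uniform resolvent bound
\[ M := \sup_{\omega \in \RR} \| \aa (H - (\omega - i0))^{-1} P_c \aa \|_{L^2 \to L^2} < \infty . \]
For $\omega \geq 0$, writing $\omega = \lambda^2$ with $\lambda \geq 0$, this is exactly the second estimate of Lemma \ref{l4} (the $-i0$ boundary value has the same norm as the $+i0$ one by self-adjointness of $H$). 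For $\omega < 0$, Assumption \ref{a2} gives $\sigma(H|_{\mathrm{Range}(P_c)}) = [0,\infty)$, so $(HP_c - \omega)^{-1}$ is a bounded operator on $L^2$. For $\omega$ very negative the elementary bound $\|(HP_c - \omega)^{-1}\|_{L^2 \to L^2} \leq |\omega|^{-1}$ immediately yields the weighted estimate, so the only remaining regime is $\omega \in [-C,0)$, where we invoke continuity of the weighted resolvent at the spectral threshold, matching the $\omega = 0$ value from Lemma \ref{l4}.

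The main technical obstacle is precisely this uniform control as $\omega \to 0^-$: away from the threshold the resolvent bounds are elementary, but the threshold itself is delicate. Here the absence of a zero resonance of $H$ (Assumption \ref{a2}) is essential; it is exactly the hypothesis guaranteeing that $\aa (HP_c - \omega)^{-1} \aa$ extends continuously from $\omega < 0$ up to and including $\omega = 0$ as a bounded operator on $L^2$, thereby producing a uniform bound $M < \infty$. Combined with the Plancherel step of the first paragraph, this gives $\|\aa u\|_{L^2_tL^2_x}^2 \leq M^2 \|F\|_{L^2_tL^2_x}^2$ and hence the lemma.
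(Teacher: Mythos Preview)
Your proof is correct and follows essentially the same route as the paper: both Fourier-transform in time (with an $e^{-\ee t}$ regularization) to convert the retarded Duhamel operator into multiplication by the resolvent, and then invoke the uniform weighted resolvent bound of Lemma~\ref{l4} together with Plancherel. Your treatment of the regime $\omega<0$ is actually more explicit than the paper's, which simply cites Lemma~\ref{l4} after the regularized computation and relies implicitly on $P_c$ (and the distance of negative $\omega$ from $\sigma(HP_c)=[0,\infty)$) to handle those frequencies.
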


\begin{proof}
For simplicity we may restrict to times $t \geq 0$. By Plancherel, we have
\begin{eqnarray}
 &   & \| \chi_{\{ t \geq 0 \}} \aa \int_{0}^{t} e^{i(t-s)(H-i \ee)} P_c \aa F(s) ds \|_{L_t^2} \\
 & = & \| \int_{0}^{\infty} e^{it \tau} \aa (\int_{0}^{t} e^{i(t-s)(H-i \ee)} P_c \aa F(s) ds) dt \|_{L_\tau^2} 
\end{eqnarray}
Next, change the order of the $ds$ and $dt$ integral and use that 
\begin{eqnarray}
 &   & \int_{s}^{\infty} dt e^{it(H-\tau-i\ee)} P_c \aa F(s) \\
 & = & \frac{1}{i} (H-\tau-i\ee)^{-1}  e^{it(H-\tau- i\ee)}|_{t=s}^{t= \infty} P_c \aa F(s) \\
 & = & \frac{-1}{i} (H-\tau-i\ee)^{-1}  e^{is(H-\tau- i\ee)} P_c \aa F(s),
\end{eqnarray}
we get
\begin{eqnarray}
 &   & \| \chi_{\{ t \geq 0 \}} \aa \int_{0}^{t} e^{i(t-s)(H-i \ee)} P_c \aa F(s) ds \|_{L_t^2} \\
 & = & \| \aa \int_{0}^{\infty} ds e^{-is(H-i\ee)} \frac{-1}{i} (H-\tau-i\ee)^{-1}  e^{is(H-\tau- i\ee)} P_c \aa F(s) \|_{L_\tau^2} \\
 & = & \| \aa (H-\tau-i\ee)^{-1} P_c \aa \int_{0}^{\infty} ds e^{-is\tau} F(s) \|_{L_\tau^2} .\\
\end{eqnarray}
If we take the $L_x^2$-norm of both sides, we get
\begin{eqnarray}
  &      & \| \aa \int_{0}^{t} e^{i(t-s)(H+i \ee)} P_c \aa F(s) ds \|_{L_t^2L_x^2} \\
  & \lesssim & \| \aa (H-\tau+i\ee)^{-1} P_c \aa \int_{0}^{\infty} ds e^{-is\tau} F(s) \|_{L_\tau^2L_x^2} \\
  & \lesssim & \sup_{\tau} \| \aa (H-\tau+i\ee)^{-1} P_c \aa \|_{L^2 \rightarrow L^2 }  \| \int_{0}^{\infty} ds e^{-is\tau} F(s) \|_{L_\tau^2L_x^2} \\
  & \lesssim & \| F \|_{L_t^2L_x^2} \;\;\; \text{by Plancherel and Lemma \ref{l4}} .
\end{eqnarray}
Now sending $ \ee $ to 0, we have 
\begin{equation} \| \aa \int_{0}^{t} e^{i(t-s)H} P_c \aa F(s) ds \|_{L_t^2L_x^2} \lesssim \| F \|_{L_t^2L_x^2} \end{equation}
as needed.
\end{proof}

\begin{lemma} \label{l7} (Mixed Strichartz weighted estimates)
Let $(q,p)$ and $(\tilde{p},\tilde{q}) $ be Strichartz admissible. Then
\begin{eqnarray} 
  &          & \| \int_{0}^{t} e^{i(t-s)H} P_c F(s) ds \|_{L_t^q L_x^p} + \| \aa \int_{0}^{t} e^{i(t-s)H} P_c F(s) ds \|_{L_t^2 L_x^2} \\
  & \lesssim & \min ( \| \ll x \rr^{\sigma} F \|_{L_t^2L_x^2} , \| F \|_{L_t^{\tilde{q}'} L_x^{\tilde{p}'}} ) . 
\end{eqnarray}
\end{lemma}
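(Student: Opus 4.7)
The left-hand side is a sum of two quantities and the right-hand side is a minimum, so the lemma reduces to four sub-estimates. Two of them are essentially already in hand: the pure Strichartz bound $\|\int_0^t e^{i(t-s)H}P_cF\,ds\|_{L^q_tL^p_x}\lesssim \|F\|_{L^{\tilde q'}_tL^{\tilde p'}_x}$ is the inhomogeneous half of Lemma \ref{l3}, and the weighted-to-weighted bound $\|\ll x\rr^{-\sigma}\int_0^t e^{i(t-s)H}P_cF\,ds\|_{L^2_tL^2_x}\lesssim \|\ll x\rr^\sigma F\|_{L^2_tL^2_x}$ is Lemma \ref{l6} after the substitution $F\mapsto \ll x\rr^\sigma F$ (which merely moves one weight from one side of the inequality to the other). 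So the real content lies in the two mixed estimates.

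For $\|\int_0^t e^{i(t-s)H}P_cF\,ds\|_{L^q_tL^p_x}\lesssim \|\ll x\rr^\sigma F\|_{L^2_tL^2_x}$, the plan is to first establish the untruncated analogue (with $\int_\RR$ in place of $\int_0^t$) by a $TT^*$-style factorization through $L^2_x$: write $\int_\RR e^{i(t-s)H}P_cF(s)\,ds = e^{itH}P_c\bigl(\int_\RR e^{-isH}P_cF(s)\,ds\bigr)$. The outer operator $g\mapsto e^{itH}P_cg$ is bounded $L^2_x\to L^q_tL^p_x$ by the homogeneous part of Lemma \ref{l3}, while the inner operator $F\mapsto \int_\RR e^{-isH}P_cF\,ds$ is the adjoint of $g\mapsto e^{itH}P_cg$, which by Lemma \ref{l4} is bounded $L^2_x\to \ll x\rr^{-\sigma}L^2_tL^2_x$; dualising gives that the inner operator sends $\ll x\rr^\sigma L^2_tL^2_x$ into $L^2_x$. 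To pass from the untruncated to the truncated integral I invoke the Christ--Kiselev lemma, which applies because the non-endpoint condition $p<6$ forces $q>2$, strictly larger than the input time exponent $2$.

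The second mixed estimate $\|\ll x\rr^{-\sigma}\int_0^t e^{i(t-s)H}P_cF\,ds\|_{L^2_tL^2_x}\lesssim \|F\|_{L^{\tilde q'}_tL^{\tilde p'}_x}$ is proved symmetrically: factor the untruncated integral through $L^2_x$ as before, apply Lemma \ref{l4} to the outer piece $\ll x\rr^{-\sigma}e^{itH}P_c$ and the dual of Lemma \ref{l3} to the inner piece, and invoke Christ--Kiselev once more. This time the condition to check is $\tilde q'<2$, which follows from $\tilde p<6$.

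The main subtlety is the Christ--Kiselev application; the strict inequality between input and output time exponents that it requires is exactly what enforces the non-endpoint restriction $p,\tilde p<6$, mirroring the hypothesis already present in Lemma \ref{l3}. Everything else is a clean composition of the previously proved estimates together with duality, with no further analytic input needed.
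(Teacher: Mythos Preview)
Your proposal is correct and follows essentially the same architecture as the paper: isolate the four sub-estimates, dispatch two of them via Lemmas~\ref{l3} and~\ref{l6}, and for each mixed estimate factor the untruncated Duhamel operator through $L^2_x$ before invoking Christ--Kiselev (with the non-endpoint hypothesis supplying the strict inequality of time exponents). The only cosmetic difference is that the paper bounds $\|\int_0^\infty e^{-isH}P_cF\,ds\|_{L^2_x}$ by an explicit $TT^*$ computation that appeals to Lemma~\ref{l6}, whereas you obtain the same bound more directly by dualising the homogeneous weighted estimate of Lemma~\ref{l4}; your route is slightly cleaner but the two are equivalent.
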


\begin{proof}
First, 
\begin{eqnarray}
     \| \int_{0}^{\infty} e^{-isH} P_c F(s) ds \|_{L_x^2}^2
  =  (\int_{0}^{\infty} e^{-isH} P_c F(s) ds, \int_{0}^{\infty} e^{-itH} P_c F(s) dt).				
\end{eqnarray}
Moving the integrals through the inner product and rearranging the terms, we get
\begin{eqnarray}
 &   & \| \int_{0}^{\infty} e^{-isH} P_c F(s) ds \|_{L_x^2}^2 \\
 & = & \int_{0}^{\infty} ds ( P_c F(s), \int_{0}^{\infty} e^{-i(t-s)H} P_c F(s) dt) \\
 & = & \int_{0}^{\infty} ds (\invaa P_c F(s), \aa \int_{0}^{\infty} e^{-i(t-s)H} P_c F(s) dt) \\
 &   & \text{by H\"older inequality} \\
 & \leq & \| \invaa P_c F(s) \|_{L_s^2 L_x^2} \;\; \| \aa \int_{0}^{\infty} e^{-i(t-s)H} P_c F(s) dt \|_{L_s^2 L_x^2} \\
 &   & \text{and by lemma \ref{l6}} \\
 & \lesssim & \| \invaa P_c F(s) \|_{L_s^2 L_x^2}^2.
\end{eqnarray}

Hence,
 \begin{eqnarray}
   \| \int_{0}^{\infty} e^{i(t-s)H} P_c F(s) ds \|_{L_t^p L_x^q} 
   & = & \| e^{itH} \int_{0}^{\infty} e^{-isH} P_c F(s) ds \|_{L_t^p L_x^q} \\
   & \lesssim & \| \int_{0}^{\infty} e^{-isH} P_c F(s) ds \|_{L_x^2}
                  \;\;\; \text{by lemma \ref{l3}} \\
   & \lesssim & \| \invaa F(s) \|_{L_s^2 L_x^2} .
 \end{eqnarray}
Now, by a lemma of Christ-Kiselev (see \cite{MK}), we have
\begin{equation}
  \| \int_{0}^{t} e^{i(t-s)H} P_c F(s) ds \|_{L_t^p L_x^q} \lesssim \| \invaa F(s) \|_{L_s^2 L_x^2} .
\end{equation}  
 
Next, let $ \invaa g(x,t) \in L_t^2 L_x^2 $. Then
\begin{eqnarray}
  &   &  \int_0^{\infty} (\invaa g(x,t), \aa \int_{0}^{\infty} e^{i(t-s)H} P_c F(s) ds) dt \\
  & = & \int_0^{\infty} (g(x,t), \int_{0}^{\infty} e^{i(t-s)H} P_c F(s) ds) dt \\
\end{eqnarray}
Moving the integrals through the inner product and rearranging the terms, we get
\begin{eqnarray}
  &   & \int_0^{\infty} (\invaa g(x,t), \aa \int_{0}^{\infty} e^{i(t-s)H} P_c F(s) ds) dt \\
  & = & \int_0^{\infty} ds (\int_{0}^{\infty} e^{i(s-t)H} P_c g(x,t) dt, F(s)) \\
  &   & \text{by H\"older inequality} \\
  & \leq & \| \int_{0}^{\infty} e^{i(s-t)H} P_c g(x,t) dt \|_{L_t^q L_x^p} \| F(s) \|_{L_t^{q'} L_x^{p'}} \\
  & \lesssim & \| \invaa g \|_{L_x^2 L_t^2} \| F(s) \|_{L_t^{q'} L_x^{p'}}
\end{eqnarray}
Hence, 
\begin{equation} \| \aa \int_{0}^{\infty} e^{i(t-s)H} P_c F(s) ds \|_{L_t^2 L_x^2} \lesssim \| F(s) \|_{L_t^{q'} L_x^{p'}} . \end{equation}
Again, by the lemma of Christ-Kiselev, we have
\begin{equation} \| \aa \int_{0}^{t} e^{i(t-s)H} P_c F(s) ds \|_{L_t^2 L_x^2} \lesssim \| F(s) \|_{L_t^{q'} L_x^{p'}}  . \end{equation}
Now by lemma \ref{l3} and lemma \ref{l6}, we have shown lemma \ref{l7}.
\end{proof}

\begin{lemma} \label{l7-2a} (Derivative Strichartz estimates)
Let $p \geq 2$ and
let 
\begin{equation} 
  H_1 = H + K = - \Delta + 2 i A \cdot \nabla + i(\nabla \cdot A) + V + K 
\end{equation}
for a sufficiently large number $ K $. Then $ H_1 $ is a positive operator on $ L^p $, and
\begin{equation} \| \phi \|_{W^{1,p}} \sim \| H_1^{\frac{1}{2}} \phi \|_{L^p} . \end{equation}
From this, it follows that
\begin{equation} \| e^{-itH} f \|_{L_t^q W_x^{1,p}} \lesssim \| f \|_{H_x^1} \end{equation}
and
\begin{equation} 
  \| \int_{0}^{t} e^{i(t-s)H} P_c F(s) ds \|_{L_t^q W_x^{1,p}}
      \lesssim \| F \|_{L_t^{\tilde{q}'} W_x^{1,\tilde{p}'}} , 
\end{equation}
for Strichartz admissible $(q,p)$ and $(\tilde{p},\tilde{q}) $.
\end{lemma}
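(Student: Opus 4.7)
The plan is to lift the $L^p$-based Strichartz estimates of Lemma \ref{l3} to $W^{1,p}$-based ones by conjugating the evolution with $H_1^{1/2}$, exploiting that this operator commutes with everything in sight: since $K$ is a scalar, $H_1 = H+K$ has the same spectral subspaces and continuous functional calculus as $H$, so $H_1^{1/2}$ commutes with the unitary group $e^{-itH}$ and with $P_c$. The whole lemma thus reduces to the two preliminary claims: (i) that $H_1$ is positive on $L^p$, so that $H_1^{1/2}$ is well-defined there, and (ii) the norm equivalence $\|\phi\|_{W^{1,p}} \sim \|H_1^{1/2}\phi\|_{L^p}$ for $p \geq 2$.

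For (i), first choose $K > |e_0|+1$, which makes $H_1>0$ as a self-adjoint operator on $L^2$. To upgrade positivity to $L^p$, observe that under Assumption \ref{a1} the lower-order perturbation $W = 2iA\cdot\nabla + i(\nabla\cdot A) + V$ is relatively small with respect to $-\Delta + K$ for large $K$, so the heat semigroup $e^{-tH_1}$ admits Gaussian upper bounds inherited from the free heat kernel by a standard diamagnetic / Trotter-type argument. This makes the semigroup $L^p$-bounded and places $H_1^{1/2}$ inside a bounded $H^{\infty}$ functional calculus on $L^p$. For (ii), the same Gaussian heat-kernel bounds combined with Calder\'on--Zygmund theory imply that the Riesz transforms $\nabla H_1^{-1/2}$ and their converses $H_1^{-1/2}\nabla$ are bounded on $L^p$ for $p \geq 2$, which is exactly the norm equivalence claimed.

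Granting (i) and (ii), each Strichartz bound follows in two lines. For the homogeneous estimate:
\[
\|e^{-itH} f\|_{L^q_t W^{1,p}_x} \sim \|e^{-itH} H_1^{1/2} f\|_{L^q_t L^p_x} \lesssim \|H_1^{1/2} f\|_{L^2_x} \sim \|f\|_{H^1_x},
\]
where the outer equivalences use the norm equivalence (with exponent $p$ and $p=2$ respectively) together with commutativity, and the middle inequality is Lemma \ref{l3} (applied on $P_c f$, with the $P_c$-component trivially bounded). The inhomogeneous estimate is entirely analogous, after pulling $H_1^{1/2}$ through both $P_c$ and $e^{i(t-s)H}$:
\[
\bigl\|\textstyle\int_0^t e^{i(t-s)H} P_c F(s)\,ds\bigr\|_{L^q_t W^{1,p}_x} \sim \bigl\|\textstyle\int_0^t e^{i(t-s)H} P_c H_1^{1/2} F(s)\,ds\bigr\|_{L^q_t L^p_x} \lesssim \|H_1^{1/2} F\|_{L^{\tilde{q}'}_t L^{\tilde{p}'}_x} \sim \|F\|_{L^{\tilde{q}'}_t W^{1,\tilde{p}'}_x}.
\]

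The main obstacle is unambiguously claim (ii): the $L^p$ equivalence for $p > 2$ is genuinely nontrivial in the presence of a magnetic potential. The $p=2$ case is immediate from the self-adjoint functional calculus, but for $p > 2$ one must invoke either the heat-kernel/Riesz-transform machinery sketched above, or a more elementary perturbative route writing $H_1^{1/2} = (-\Delta + K)^{1/2} + R$ via a Bochner/resolvent representation and controlling $R$ on $L^p$ by hand. Either way, the strong decay of $A$ and $V$ from Assumption \ref{a1} is essential for the perturbation $W$ to be small enough to keep the heat-kernel estimates intact.
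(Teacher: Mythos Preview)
Your overall architecture---commute $H_1^{1/2}$ through $e^{-itH}$ and $P_c$, then quote Lemma~\ref{l3}---is exactly the paper's, and your last two displayed chains are essentially identical to the paper's final computation. The divergence is entirely in how the norm equivalence $\|\phi\|_{W^{1,p}} \sim \|H_1^{1/2}\phi\|_{L^p}$ is obtained.

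You propose to get it from heat-kernel Gaussian upper bounds (via diamagnetic/Trotter) and then Calder\'on--Zygmund theory for the Riesz transform $\nabla H_1^{-1/2}$. This route does work for $1<p\le 2$ (Coulhon--Duong type arguments need only pointwise Gaussian upper bounds), which handles the input exponent $\tilde p'\le 2$ in the inhomogeneous estimate. But for $p>2$---precisely the output side you need---Gaussian upper bounds alone are \emph{not} sufficient for $L^p$-boundedness of $\nabla H_1^{-1/2}$; one needs in addition gradient bounds on the heat kernel, or a reverse-H\"older condition on the potential, neither of which is automatic for magnetic Schr\"odinger operators under Assumption~\ref{a1}. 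You correctly flag this as the main obstacle, but your sketch does not close it.

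The paper avoids the Riesz-transform issue altogether by an interpolation argument: first it proves directly (Appendix) that $\|\phi\|_{W^{2,p}} \sim \|H_1\phi\|_{L^p}$, which is elementary because it involves $H_1$ rather than its square root; then it invokes Cowling's theorem \cite{MGC} to obtain bounded imaginary powers $H_1^{it}$ on $L^p$; finally it uses Triebel's complex-interpolation identification $[D(H_1),L^p]_{1/2}=D(H_1^{1/2})$ together with $[W^{2,p},L^p]_{1/2}=W^{1,p}$ to conclude. This bypasses the $p>2$ Riesz-transform problem at the cost of the imaginary-power machinery, and works uniformly for $1<p<\infty$. Your alternative suggestion of a perturbative square-root representation is closer in spirit to what the paper does later in Lemma~\ref{l7-2b} for the weighted case, but is not the argument used here.
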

\begin{proof}
We would like to first show
\begin{equation} \| \phi \|_{W^{1,p}} \sim \| H_1^{\frac{1}{2}} \phi \|_{L^p} \;\; \text{for} \;\; \phi \in W^{1,p}. \end{equation}
Clearly $ \| \phi \|_{W^{0,p}} = \| \phi \|_{L^p} = \| H_1^{0} \phi \|_{L^p} $.
We will show in the appendix that for $ K $ large enough, $ H_1 $ is a positive operator on $ L^p $, and
\begin{equation} \| \phi \|_{W^{2,p}} \sim \| H_1 \phi \|_{L^p}  . \end{equation}

By Theorem 1 of \cite{MGC}, there exist positive numbers $ \ee $ and $ C $, such that $ H_1^{it} $ is a bounded operator on $ L^p $ for $ -\ee \leq t \leq \ee $ and $ \| H_1^{it} \| \leq C $.
Therefore the hypothesis of section 1.15.3 of \cite{HT} holds and we have that
\begin{equation} [D(H_1), D(H_1^0)]_{\frac{1}{2}} = D(H_1^{\frac{1}{2}}) . \end{equation}
Using that $ D(H_1) = W^{2,p}$, $ D(H_1^0) = L^p $ and $ [W^{2,p},L^p]_{\frac{1}{2}} = W^{1,p}$, we find that
\begin{equation} D(H_1^{\frac{1}{2}}) = W^{1,p} . \end{equation}
Now by section 1.15.2 of \cite{HT}, $H_1^{\frac{1}{2}} $ is an isomorphic mapping from $ D(H_1^{\frac{1}{2}}) = W^{1,p} $ onto $ L^p $. Therefore, we have
\begin{equation} \| \phi \|_{W^{1,p}} \sim \| H_1^{\frac{1}{2}} \phi \|_{L^p} . \end{equation}
Finally,
\begin{eqnarray}
  \| \int_{0}^{t} e^{i(t-s)H} P_c F(s) ds \|_{L_t^q W_x^{1,p}} 
  &    =     & \| \; \|  \int_{0}^{t} e^{i(t-s)H} P_c F(s) ds \|_{W_x^{1,p}} \|_{L_t^q} \\
  &   \sim   & \| \; \|  H_1^{\frac{1}{2}} \int_{0}^{t} e^{i(t-s)H} P_c F(s) ds \|_{L_x^p} \|_{L_t^q} \\
  &    =     & \| \; \| \int_{0}^{t} e^{i(t-s)H} P_c H_1^{\frac{1}{2}} F(s) ds \|_{L_x^p} \|_{L_t^q} \\
  & \lesssim & \| H_1^{\frac{1}{2}} F \|_{L_t^{\tilde{q}'} L_x^{\tilde{q}'}} \\
  &   \sim   & \| F \|_{L_t^{\tilde{q}'} W_x^{1,\tilde{p}'}}
\end{eqnarray}
\end{proof}
For $ s \in \mathbf{R} $, denote the norm $ \| \phi \|_{\ll x \rr^s L^2} $ by
\begin{equation} \| \phi \|_{\ll x \rr^s L^2} = \| \ll x \rr^{-s} \phi \|_{L^2} \end{equation}
and the norm $ \| \phi \|_{\ll x \rr^s H^1 } $ by
\begin{equation} \| \phi \|_{\ll x \rr^s H^1 } = \| \phi \|_{\ll x \rr^s L^2 } + \| \nabla \phi \|_{\ll x \rr^s L^2} . \end{equation}
Next we need derivative version of the weighted estimates of Lemma \ref{l6} - this is given in Lemma (\ref{l7-2b}) below. First, we need two preparatory lemmas.
\begin{lemma} \label{l7-2b-0}
For $ t > 0 $, let $ A_t(x) = \frac{1}{\sqrt{t}} A(\frac{x}{\sqrt{t}}) $ and $ V_t(x) = \frac{1}{t} V(\frac{x}{\sqrt{t}}) $. 
Let 
\begin{equation} 
  \tilde{H} =  - \Delta 
            +  2 i A_t \cdot \nabla + i(\nabla \cdot A_t) 
            + V_t + \frac{1}{t} K + 1 .
\end{equation}
Then there exists $ T > 0 $ such that $ \sup_{t > T} \| \tilde{H}^{-1} \|_{L^2 \rightarrow H^2} < \infty $. 
\end{lemma}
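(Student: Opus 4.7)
The plan is to treat $\tilde H$ as a perturbation of the free operator $H_0 = -\Delta + 1$ in the regime $t \gg 1$, and then invert by Neumann series. Write $\tilde H = H_0 + W_t$ where
\begin{equation}
  W_t = 2 i A_t \cdot \nabla + i(\nabla \cdot A_t) + V_t + \tfrac{K}{t}.
\end{equation}
Since $H_0 : H^2 \to L^2$ is an isomorphism with bounded inverse (say $\|H_0^{-1}\|_{L^2 \to H^2} = C_0$), we have $\tilde H = H_0 (I + H_0^{-1} W_t)$, so it suffices to prove that $\|W_t\|_{H^2 \to L^2} \to 0$ as $t \to \infty$. If this holds, then for $T$ so large that $C_0 \|W_t\|_{H^2 \to L^2} \le 1/2$ for every $t > T$, Neumann series gives $\|\tilde H^{-1}\|_{L^2 \to H^2} \le 2 C_0$ uniformly.

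The work is therefore estimating each term of $W_t$ from $H^2$ to $L^2$, using the scaling identities $\|A_t\|_{L^p} = t^{\frac{3}{2p}-\frac{1}{2}}\|A\|_{L^p}$ and $\|V_t\|_{L^p}=\|(\nabla\cdot A_t)\|_{L^p}=t^{\frac{3}{2p}-1}\|\cdot\|_{L^p}$. For the first-order term I would use the pointwise bound $|A(x)| \lesssim \langle x\rangle^{-1-\epsilon}$ from Assumption \ref{a1}, which gives $A \in L^\infty$ and hence
\begin{equation}
  \|A_t \cdot \nabla u\|_{L^2} \leq \|A_t\|_{L^\infty}\|\nabla u\|_{L^2} \lesssim t^{-1/2}\|u\|_{H^2}.
\end{equation}
For the two zeroth-order pieces involving $V$ and $\nabla \cdot A$ I would invoke Assumption \ref{self-adjoint} to split $V = V_1 + V_2$ and $\nabla \cdot A = g_1 + g_2$ with $V_1,g_1 \in L^2$ and $V_2,g_2 \in L^\infty$. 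Using the Sobolev embedding $H^2(\RR^3) \hookrightarrow L^\infty$, both contributions are controlled by
\begin{equation}
  \|V_{1,t}\|_{L^2}\|u\|_{L^\infty} + \|V_{2,t}\|_{L^\infty}\|u\|_{L^2}
     \lesssim (t^{-1/4} + t^{-1})\,\|u\|_{H^2},
\end{equation}
and likewise for $\nabla \cdot A_t$. The constant term $\tfrac{K}{t} u$ contributes $\tfrac{K}{t}\|u\|_{L^2}$. Combining,
\begin{equation}
  \|W_t\|_{H^2 \to L^2} \lesssim t^{-1/4} \qquad\text{for all }t \geq 1,
\end{equation}
which is the decay we needed.

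The main (mild) obstacle is handling the $L^2$-component of $\nabla \cdot A$ and $V$ under rescaling: the corresponding rescaled functions are merely in $L^2$ with small norm, so multiplying by $u \in H^2$ and landing in $L^2$ forces one to put $u$ into $L^\infty$, which is precisely where the three-dimensional Sobolev embedding is used and where the exponent $t^{-1/4}$ comes from. Once this estimate is in hand, the Neumann series argument gives the claim with $T$ chosen so that $C_0 \|W_t\|_{H^2 \to L^2} \le 1/2$ for all $t > T$.
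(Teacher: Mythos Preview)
Your argument is correct, and it takes a somewhat different route from the paper's. The paper proves the bound by a direct energy computation: for $h=\tilde H^{-1}\phi$ it expands $\|\phi\|_2^2=\|\tilde H h\|_2^2$, keeps the diagonal pieces $\|\Delta h\|_2^2+\|h\|_2^2+2\|\nabla h\|_2^2$, and shows term-by-term that every cross term (those involving $A_t$, $\nabla\cdot A_t$, $V_t$, $K/t$) is $o(1)\|h\|_{H^2}^2$ as $t\to\infty$, via H\"older and Gagliardo--Nirenberg interpolation with the $L^q+L^\infty$ and $L^2+L^\infty$ hypotheses of Assumption~\ref{self-adjoint}. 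Your approach instead factors $\tilde H=H_0(I+H_0^{-1}W_t)$ and bounds the single operator norm $\|W_t\|_{H^2\to L^2}$, which packages the same smallness more cleanly and makes the inversion immediate by Neumann series. The two arguments are equivalent in spirit---both say the lower-order terms are a small perturbation of $-\Delta+1$ after rescaling---but yours avoids the explicit expansion of all the cross terms. One minor difference: you invoke the pointwise bound $|A|\lesssim\langle x\rangle^{-1-\epsilon}$ from Assumption~\ref{a1} to put $A$ in $L^\infty$, whereas the paper works only with the weaker $L^q+L^\infty$ hypothesis of Assumption~\ref{self-adjoint}; since the lemma is used under the full set of assumptions anyway, this is harmless.
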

\begin{proof}
Take $ t \geq 1$. For $ \phi \in L^2 $, let $ h = \tilde{H}^{-1} \phi $. Then
\begin{eqnarray}
 \| \phi \|^2_2 
 & = & \Big( (- \Delta 
            + 2 i A_t \cdot \nabla + i( \nabla \cdot A_t) 
            + V_t + \frac{1}{t} K + 1)h, \\
 &   &
          \;\; (- \Delta 
            + 2 i A_t \cdot \nabla + i( \nabla \cdot A_t)  
            + V_t + \frac{1}{t} K + 1)h \Big) \\
 & = & \| \Delta h \|_2^2 + \| h \|_2^2 + \| A_t \cdot \nabla h \|_2^2 + 2 \| \nabla h \|_2^2  + F \\
 & \gtrsim & \| \Delta h \|_2^2 + \| h \|_2^2 + F \\
\end{eqnarray}
where $ F $ denotes the rest of the terms, and recall that $ q > 3 $.
We would like to show that every term in $ F $ is bounded by $ \| h \|^2_{H^2} $.
Here,
\begin{eqnarray} 
| F | 
& \leq & 2 \|  (\Delta h) (A_t \cdot \nabla h)  \|_1
      + 2 \| (\Delta h) ( \nabla \cdot A_t + V_t + \frac{1}{t} K) h \|_1 \\
&   &  + 2 \| [A_t (\nabla \cdot A_t + V_t + \frac{1}{t} K)] \cdot (\nabla h) h \|_1 \\
&   &  + 2 \| A_t \cdot (\nabla h) h \|_1
       + 2 \| (A_t+V_t+ \frac{1}{t} K)^2 h^2 \|_1         
\end{eqnarray}
Here,
\begin{eqnarray} 
\|  (\Delta h) (A_t \cdot \nabla h) \|_1
& \lesssim & \frac{1}{\sqrt{t}} \| \Delta h \|_2 \| (A(\frac{.}{\sqrt{t}}) \|_{L^\infty + L^q} (\| \nabla h \|_2 + \| \nabla h \|_{\frac{2q}{q-2}}) \\
&          & \text{where} \frac{2q}{q-2} < 6 \\
& \lesssim & \frac{1}{\sqrt{t}} \| \Delta h \|_2 \| (A(\frac{.}{\sqrt{t}}) \|_{L^\infty + L^q} 
                                   (\| \nabla h \|_2 + \| \Delta h \|_2^{\frac{3}{q}} \| \nabla h \|_2^{\frac{q-3}{q}} )   \\
& \lesssim & t^{-\frac{(q-3)}{2q} } \| \Delta h \|_2 \| A \|_{L^\infty + L^q} 
                                   (\| \nabla h \|_2 + \| \Delta h \|_2^{\frac{3}{q}} \| \nabla h \|_2^{\frac{q-3}{q}} ), \\
\end{eqnarray}
\begin{eqnarray} 
& & \| (\Delta h) ((\nabla \cdot A_t) + V_t + \frac{1}{t} K) h \|_1 \\
& \lesssim & \frac{1}{t} \| \Delta h \|_2 
             (\| (\nabla \cdot A) (\frac{.}{\sqrt{t}} ) \|_{L^\infty + L^2} + \| V(\frac{.}{\sqrt{t}}) \|_{L^\infty + L^2} + K) 
             (\| h \|_2 + \| h \|_\infty ) \\
& \lesssim & t^{\frac{-1}{4}} \| \Delta h \|_2 
             (\| \nabla \cdot A \|_{L^\infty + L^2} + \| V \|_{L^\infty + L^2} + K) 
             (\| h \|_2 + \| h \|_2^{\frac{1}{4}} \| \Delta h \|_2^{\frac{3}{4}} ) .\\
\end{eqnarray} 

Similar bounds hold for the other terms of $ F $.
We conclude that 
\begin{equation} \| \phi \|_2^2 \geq (1 + o(1)) \| h \|^2_{H^2} \;\; \text{as} \;\; t \rightarrow \infty. \end{equation}
Hence, for all $ t $ large enough, we have 
\begin{equation} \| h \|^2_{H^2} \lesssim \| \phi \|_2^2 . \end{equation}
\end{proof}
\begin{lemma} \label{l7-2b-1}
Let $ H_1 $ be as in lemma \ref{l7-2a}. For $ \phi \in L^2 $ and $t > 0$, we have
\begin{equation} \| \nabla (H_1+t)^{-1} \phi \|_{L^2} \lesssim (1+t)^{-\frac{1}{2}} \| \phi \|_{L^2}  . \end{equation}
\end{lemma}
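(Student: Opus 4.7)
The plan is to reduce the estimate to Lemma \ref{l7-2b-0} by a parabolic scaling $x \mapsto \sqrt{t}\,x$, which converts the large parameter $t$ in $H_1+t$ into a normalization constant while turning the ambient operator into the rescaled operator $\tilde H$. First I would set $u = (H_1+t)^{-1}\phi$ and introduce $\tilde u(y) = u(y/\sqrt{t})$, so that $u(x) = \tilde u(\sqrt{t}\,x)$. A direct change of variables in $(H_1+t)u = \phi$, dividing through by $t$, yields
\begin{equation}
\Bigl[-\Delta_y + \tfrac{2i}{\sqrt{t}}\,A(y/\sqrt{t})\cdot\nabla_y + \tfrac{i}{t}(\nabla\cdot A)(y/\sqrt{t}) + \tfrac{1}{t}V(y/\sqrt{t}) + \tfrac{K}{t} + 1\Bigr]\tilde u(y) = \tfrac{1}{t}\phi(y/\sqrt{t}),
\end{equation}
which, after recognizing $A_t(y) = \tfrac{1}{\sqrt{t}}A(y/\sqrt{t})$, $(\nabla\cdot A_t)(y) = \tfrac{1}{t}(\nabla\cdot A)(y/\sqrt{t})$, and $V_t(y) = \tfrac{1}{t}V(y/\sqrt{t})$, is exactly $\tilde H \tilde u = \tfrac{1}{t}\phi(\cdot/\sqrt{t})$.

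Next, for $t > T$ with $T$ given by Lemma \ref{l7-2b-0}, I would apply that lemma to obtain
\begin{equation}
\|\tilde u\|_{H^2_y} \;\lesssim\; \bigl\|\tfrac{1}{t}\phi(\cdot/\sqrt{t})\bigr\|_{L^2_y} = \tfrac{1}{t}\cdot t^{3/4}\,\|\phi\|_{L^2_x} = t^{-1/4}\|\phi\|_{L^2_x}.
\end{equation}
The simple scaling identity $\|\nabla_x u\|_{L^2_x} = t^{-1/4}\|\nabla_y \tilde u\|_{L^2_y}$ (from $\nabla_x u(x) = \sqrt{t}(\nabla_y\tilde u)(\sqrt{t}x)$ combined with $dx = t^{-3/2}dy$) then gives $\|\nabla u\|_{L^2} \lesssim t^{-1/4}\cdot t^{-1/4}\|\phi\|_{L^2} = t^{-1/2}\|\phi\|_{L^2}$, which is the desired bound in the regime $t > T$, since $t^{-1/2}\sim (1+t)^{-1/2}$ there.

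For the complementary range $0 < t \leq T$, it is enough to prove $\|\nabla u\|_{L^2} \lesssim \|\phi\|_{L^2}$, because on any bounded interval $(1+t)^{-1/2}$ stays bounded away from zero. Since $H_1$ is a positive self-adjoint operator on $L^2$ by Lemma \ref{l7-2a}, the resolvent $(H_1+t)^{-1}$ is bounded on $L^2$ uniformly for $t\geq 0$, and standard elliptic regularity (equivalently the $W^{2,2}$-estimate $\|u\|_{H^2}\sim \|H_1 u\|_{L^2}$ promised by Lemma \ref{l7-2a} and proved in the appendix) gives $\|u\|_{H^2} \lesssim \|(H_1+t)u\|_{L^2} + t\|u\|_{L^2} \lesssim \|\phi\|_{L^2}$ uniformly in $t\in [0,T]$. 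Combining the two ranges yields the claim.

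The main technical point will be the scaling computation in the first step — in particular, verifying that the divergence term rescales correctly so that the transformed operator matches exactly $\tilde H$ from Lemma \ref{l7-2b-0}, and tracking the power of $t$ that comes from the Jacobian on both the right-hand side and the gradient norm. Once that bookkeeping is in place, the proof is a direct application of Lemma \ref{l7-2b-0} together with the standard $L^2\to H^2$ bound for $H_1$.
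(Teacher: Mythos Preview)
Your proof is correct and follows essentially the same approach as the paper: a parabolic rescaling $x\mapsto \sqrt{t}\,x$ that converts $(H_1+t)u=\phi$ into $\tilde H\,\tilde u = t^{-1}\phi(\cdot/\sqrt{t})$, followed by an application of Lemma~\ref{l7-2b-0} and the change-of-variables bookkeeping to recover the $t^{-1/2}$ decay. The only difference is cosmetic --- the paper writes $\psi(x)=\tfrac{1}{t}\hat\psi(\sqrt{t}x)$ (so $\hat\psi=t\tilde u$) and leaves the small-$t$ range implicit in the phrase ``and the lemma follows,'' whereas you treat $0<t\le T$ explicitly via the $H^2\sim \|H_1\cdot\|_{L^2}$ equivalence; both are fine.
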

\begin{proof}
For $ \phi \in L^2 $, let $ \psi = (H_1 + t)^{-1} \phi $. For $ t $ bounded away from zero, define $ \hat{\psi} $ by $ \psi(x) = \frac{1}{t} \hat{\psi}(\sqrt{t}x) $. Then $ \Delta \psi(x) = \Delta \hat{\psi}(\sqrt{t}x) $, $ \nabla \psi(x) = \frac{1}{\sqrt{t}} \nabla \hat{\psi}(\sqrt{t}x) $ and $ V(x) \psi(x) = \frac{1}{t} V(x) \hat{\psi}(\sqrt{t}x) $ and
\begin{equation} (\tilde{H} \hat{\psi})(\sqrt{t}x) = \phi(x) . \end{equation}
Replacing $ x $ by $ \frac{x}{\sqrt{t}}$ and inverting $ \tilde{H} $, we get
\begin{equation} \hat{\psi}(x) = \tilde{H}^{-1} \phi(\frac{x}{\sqrt{t}})  . \end{equation}
Hence,
\begin{equation} \psi(x) = \frac{1}{t}[\tilde{H}^{-1} \phi(\frac{.}{\sqrt{t}})](\sqrt{t}x) \end{equation}
and
\begin{equation} \nabla \psi(x) = \frac{1}{\sqrt{t}} [\nabla (\tilde{H})^{-1} \phi(\frac{.}{\sqrt{t}})](\sqrt{t}x). \end{equation}
By Lemma \ref{l7-2b-0}, $ \| \tilde{H}^{-1} \|_{L^2 \rightarrow L^2} $ is uniformly bounded for $ t \geq T$. Therefore,
\begin{eqnarray}
\| \nabla \psi(x) \|_2 
& = &  \| \frac{1}{\sqrt{t}} [\nabla \tilde{H}^{-1} \phi(\frac{.}{\sqrt{t}})](\sqrt{t}x) \|_2 \\
& = & t^{{\frac{-3}{4}} - \frac{1}{2}} \| \nabla \tilde{H}^{-1} \phi(\frac{.}{\sqrt{t}}) \|_2 \\
& \lesssim  & t^{{\frac{-3}{4}} - \frac{1}{2}} \| \nabla \tilde{H}^{-1} \|_{L^2 \rightarrow L^2}  \| \phi(\frac{.}{\sqrt{t}}) \|_2 \\
& = & t^{-\frac{1}{2}} \| \phi \|_2
\end{eqnarray}
Therefore, for $ t \geq T $,
\begin{equation} \| \nabla (H_1+t)^{-1} \phi \|_2 \lesssim t^{-\frac{1}{2}} \| \phi \|_2 \end{equation}
and the lemma follows.
\end{proof}
\begin{lemma} \label{l7-2b} (Derivative weighted estimates) 
Let $ H_1 $ be as in lemma \ref{l7-2a}. We have
\begin{equation} \| \phi \|_{\ll x \rr^{s} H^1} \sim \| H_1^{\frac{1}{2}} \phi \|_{\ll x \rr^{s} L^2}  \;\; \text{for} \;\; s \in \RR. \end{equation}
From this, it follows that
\begin{equation} 
  \| \aa \int_{0}^{t} e^{i(t-s)H} P_c F(s) ds \|_{L_t^2 H_x^1} 
      \lesssim  \| \ll x \rr^{\sigma} F \|_{L_t^2 H_x^1} . 
\end{equation}
\end{lemma}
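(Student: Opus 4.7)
The lemma splits into two assertions: a norm equivalence
\begin{equation*}
\|\phi\|_{\langle x\rangle^s H^1}\sim\|H_1^{1/2}\phi\|_{\langle x\rangle^s L^2}
\end{equation*}
followed by its application to the Duhamel integral. The plan is to prove them in that order, since once the equivalence is in hand, the Duhamel bound is a three-line consequence of Lemma \ref{l6}.

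For the Duhamel bound, I would exploit that $K\in\RR$ is a scalar, so $H_1=H+K$ has the same functional calculus as $H$ and $H_1^{1/2}$ commutes with both $e^{isH}$ and $P_c$. Hence $H_1^{1/2}$ passes inside the Duhamel integral. Applying the norm equivalence first with $s=\sigma$ converts the weighted $H^1$ norm of $\int_0^t e^{i(t-s)H}P_cF(s)\,ds$ into the weighted $L^2$ norm of $\int_0^t e^{i(t-s)H}P_c H_1^{1/2}F(s)\,ds$; Lemma \ref{l6} then bounds this by $\|\langle x\rangle^\sigma H_1^{1/2}F\|_{L^2_tL^2_x}$; and a second application of the equivalence, now with $s=-\sigma$, turns this into $\|\langle x\rangle^\sigma F\|_{L^2_tH^1_x}$, which is exactly the desired right-hand side.

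For the norm equivalence itself, I would mimic the interpolation scheme of Lemma \ref{l7-2a}, but with the Lebesgue ladder $\{W^{k,p}\}$ replaced by the weighted ladder $\{\langle x\rangle^s H^k\}$. Two ingredients are needed. First, the endpoint elliptic estimate $\|\phi\|_{\langle x\rangle^s H^2}\sim\|H_1\phi\|_{\langle x\rangle^s L^2}$, which I would prove by writing
\begin{equation*}
H_1\phi=-\Delta\phi+2iA\cdot\nabla\phi+i(\nabla\cdot A)\phi+(V+K)\phi
\end{equation*}
and controlling each lower-order term in weighted $L^2$ by H\"older together with Sobolev embedding, much in the style of Lemma \ref{l7-2b-0}. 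Second, uniform boundedness of the imaginary powers, $\sup_{|t|\le\epsilon}\|H_1^{it}\|_{\langle x\rangle^s L^2\to\langle x\rangle^s L^2}<\infty$. For this I would conjugate Meda's unweighted bound \cite{MGC} through $\langle x\rangle^{\pm s}$: using the Cauchy-integral representation of $H_1^{it}$ together with
\begin{equation*}
[\langle x\rangle^s,(H_1+\lambda)^{-1}]=-(H_1+\lambda)^{-1}[\langle x\rangle^s,H_1](H_1+\lambda)^{-1}
\end{equation*}
reduces the commutator to resolvent-sandwiched copies of the first-order operator $[\langle x\rangle^s,H_1]$, whose size $\langle x\rangle^{s-1}$ times $\nabla$ is exactly what can be absorbed by the $L^2$-gradient bound from Lemma \ref{l7-2b-1}. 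With both ingredients in hand, sections 1.15.2 and 1.15.3 of \cite{HT} identify $D(H_1^{1/2})\cap\langle x\rangle^s L^2$ with $[\langle x\rangle^s H^2,\langle x\rangle^s L^2]_{1/2}=\langle x\rangle^s H^1$, yielding the equivalence.

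The main obstacle will be the transfer of Meda's imaginary-power bound to the weighted setting: the commutator $[\langle x\rangle^s,H_1]$ contains a genuine first derivative, so only the optimal $(1+t)^{-1/2}$ gradient-resolvent decay of Lemma \ref{l7-2b-1} --- not the weaker bound implied merely by the $L^2\to L^2$ mapping of $(H_1+t)^{-1}$ --- is strong enough to keep the Cauchy contour integral convergent uniformly in $|t|\le\epsilon$. Everything else is a routine reassembly of the ingredients already developed in this section.
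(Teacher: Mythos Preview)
Your derivation of the Duhamel estimate from the norm equivalence matches the paper's (implicit) argument. The difference lies entirely in how you obtain the norm equivalence itself.

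The paper bypasses interpolation and attacks $H_1^{-1/2}$ directly. It reduces the equivalence to the $L^2$-boundedness of $\langle x\rangle^{-s}H_1^{-1/2}\langle x\rangle^{s}$ and $\nabla\,\langle x\rangle^{-s}H_1^{-1/2}\langle x\rangle^{s}$, then writes $H_1^{-1/2}=c\int_0^\infty t^{-1/2}(H_1+t)^{-1}\,dt$ and expands the commutator via $[(H_1+t)^{-1},\langle x\rangle^{s}]=(H_1+t)^{-1}[H_1,\langle x\rangle^{s}](H_1+t)^{-1}$. Since $[H_1,\langle x\rangle^{s}]=g(x)+h(x)\cdot\nabla$ with $g,h=O(\langle x\rangle^{s-1})$, the weight is peeled down by one power at the cost of an extra resolvent; iterating $\lceil s\rceil$ times leaves bounded coefficients and integrals of the form $\int_0^\infty t^{-1/2}\bigl((H_1+t)^{-1}\bigr)^m\,dt$ or $\int_0^\infty t^{-1/2}\bigl((H_1+t)^{-1}\nabla(H_1+t)^{-1}\bigr)^m\,dt$, finite precisely because of Lemma~\ref{l7-2b-1}.

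Your interpolation route is viable and ultimately runs on the same commutator/resolvent-decay machinery, but you deploy it to control $H_1^{it}$ on $\langle x\rangle^{s}L^2$ rather than $H_1^{-1/2}$ directly. That detour carries extra baggage: you must verify the weighted endpoint identity $D(H_1)_{\langle x\rangle^{s}L^2}=\langle x\rangle^{s}H^2$, check that $H_1$ is positive on the (non-Hilbert) weighted space so that Triebel's abstract theorem applies, and invoke the interpolation identity $[\langle x\rangle^{s}H^2,\langle x\rangle^{s}L^2]_{1/2}=\langle x\rangle^{s}H^1$. The paper's hands-on approach buys concreteness and fewer side hypotheses; yours buys both directions of $\sim$ simultaneously and a template that would extend to other fractional powers of $H_1$.
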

\begin{proof}
Since $ \| f \|_{\ll x \rr^{s} H^1} = \| \ll x \rr^{-s} f \|_{L^2} + \| \nabla \ll x \rr^{-s} f \|_{L^2} $, to show the lemma, it suffices to show
\begin{equation} \| \ll x \rr^{-s} H_1^{-\frac{1}{2}} \ll x \rr^{s} \|_{L^2 \rightarrow L^2} < \infty \end{equation}
and
\begin{equation} \| \nabla \ll x \rr^{-s} H_1^{-\frac{1}{2}} \ll x \rr^{s} \|_{L^2 \rightarrow L^2} < \infty . \end{equation}
The second bound above is the harder of the two. We will show the second bound and the first one follows by a similar argument.
First,
\begin{equation} 
   \nabla \ll x \rr^{-s} H_1^{-\frac{1}{2}} \ll x \rr^{s} \phi 
   =  \nabla H_1^{-\frac{1}{2}} \phi + \nabla \ll x \rr^{-s} [H_1^{-\frac{1}{2}}, \ll x \rr^{s}] \phi
\end{equation}
Now $ \nabla H_1^{-\frac{1}{2}} $ is bounded from $ L^2 $ to $ L^2 $ since $ H_1^{-\frac{1}{2}} $ maps from $ L^2 $ to $ H^1 $ while $ \nabla $ maps from $ H^1 $ to $ L^2 $.

For the second term, we use $ H_1^{-\frac{1}{2}} = \int_0^{\infty} \frac{dt}{\sqrt{t}} (H_1+t)^{-1} $ and $ [(H_1+t)^{-1}, \ll x \rr^s] = (H_1+t)^{-1} [H_1+t,\ll x \rr^s] (H_1+t)^{-1} $ to get
\begin{equation} 
  \nabla \ll x \rr^{-s} [H_1^{-\frac{1}{2}}, \ll x \rr^{s}]
  = \nabla \ll x \rr^{-s} \int_{0}^{\infty} \frac{dt}{\sqrt{t}} (H_1+t)^{-1} [H_1+t,\ll x \rr^s] (H_1+t)^{-1}
\end{equation}
Recall that
\begin{equation} H_1 = - \Delta + 2iA \cdot \nabla + i(\nabla \cdot A) + V + K , \end{equation}
so 
\begin{eqnarray}
 [H_1+t,\ll x \rr^s] 
 & = & (-\Delta \ll x \rr^s) -2 (\nabla \ll x \rr^s) \cdot \nabla + 2i A \cdot (\nabla \ll x \rr^s).
\end{eqnarray}
Let $ g(x) = (-\Delta \ll x \rr^s) + 2i A \cdot (\nabla \ll x \rr^s) $ and $ h(x) = -2(\nabla \ll x \rr^s) $. Then 
\begin{equation}
  \nabla \ll x \rr^{-s} [H_1^{-\frac{1}{2}}, \ll x \rr^{s}] 
  = \nabla \ll x \rr^{-s} \int_{0}^{\infty} \frac{dt}{\sqrt{t}} (H_1+t)^{-1} (g(x) + h(x) \cdot \nabla)(H_1+t)^{-1}  
 .\end{equation}
Since $ g(x) \lesssim \ll x \rr^{s-1} $, we rewrite the $ g(x)$-part of the above as
\begin{eqnarray}
 & & \nabla \ll x \rr^{-s} \int_{0}^{\infty} \frac{dt}{\sqrt{t}} (H_1+t)^{-1} g(x) (H_1+t)^{-1} \\
 & = &  \nabla \int_{0}^{\infty} \frac{dt}{\sqrt{t}} \ll x \rr^{-s} g(x) (H_1+t)^{-1} (H_1+t)^{-1} \\
 &  & + \nabla \ll x \rr^{-s} \int_{0}^{\infty} \frac{dt}{\sqrt{t}} (H_1+t)^{-1} [H_1+t,g(x)] (H_1+t)^{-1} (H_1+t)^{-1}
\end{eqnarray}
The first part of the above sum is bounded. For the second part, writing $ [H_1+t,g(x)] = \tilde{g}(x) + \tilde{h}(x) \cdot \nabla $ as before , we can iterate the above process until $ \tilde{g}(x) \lesssim 1$.
Since $ h(x) \lesssim \ll x \rr^{s-1} $, so by the similar argument, we have 
\begin{eqnarray}
 & & \nabla \ll x \rr^{-s} \int_{0}^{\infty} \frac{dt}{\sqrt{t}} (H_1+t)^{-1} h(x) \cdot \nabla (H_1+t)^{-1} \\
 & = &  \nabla \int_{0}^{\infty} \frac{dt}{\sqrt{t}} \ll x \rr^{-s} h(x) (H_1+t)^{-1} \nabla (H_1+t)^{-1} \\
 &  & + \nabla \ll x \rr^{-s} \int_{0}^{\infty} \frac{dt}{\sqrt{t}} (H_1+t)^{-1} [H_1+t,h(x)] (H_1+t)^{-1} \nabla (H_1+t)^{-1}
\end{eqnarray}
As before, the first part of the above sum is bounded. For the second part, $ [H_1+t,g(x)] = \tilde{g}(x) + \tilde{h}(x) \cdot \nabla $ as before , we can iterate the above process until $ \tilde{h}(x) \lesssim 1$. As a result, it suffices to consider
\begin{equation} \int_{0}^{\infty} \frac{dt}{\sqrt{t}} ((H_1+t)^{-1})^{m} \end{equation}
and 
\begin{equation} \int_{0}^{\infty} \frac{dt}{\sqrt{t}} ((H_1+t)^{-1} \nabla (H_1+t)^{-1})^{m} \end{equation}
for $ m \geq 1 $. 
Now by lemma \ref{l7-2b-1}, both of the expressions above are bounded in $L^2$.
\end{proof}
Now, to prove theorem \ref{t7-2}, apply lemma \ref{l7-2a} and \ref{l7-2b} to lemma \ref{l7}, we get the result.

Finally, we need a lemma from \cite{GNT} for the projection operator $P_c$ onto the continuous spectral subspace.
\begin{lemma} \label{l8} (Continuous spectral subspace comparison)
Let the continuous spectral subspace $ \mathcal{H}_c[z] $ be defined as 
\begin{equation} \mathcal{H}_c[z] = \{ \eta \in L^2 | \ll i \eta, D_1 Q[z] \rr = \ll i \eta, D_2 Q[z] \rr = 0 \} . \end{equation}
Then there exists $ \delta > 0$ such that for each $ z \in \CC$ with $ |z| \leq \delta $, there is a bijective operator $ R[z] : \text{Ran} \; P_c 
\rightarrow \mathcal{H}_c[z]$ satisfying
\begin{equation} P_c |_{\mathcal{H}_c[z]} = (R[z])^{-1} . \end{equation}
Moreover, $ R[z] - I$ is compact and continuous in $z$ in the operator norm on any space $Y$ satisfying $ H^2 \cap W^{1,1} 
\subset Y \subset 
H^{-2} + L^{\infty}$.
\end{lemma}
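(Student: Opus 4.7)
The strategy is to construct $R[z]$ explicitly as a rank-at-most-two perturbation of the identity whose correction lives in the fixed real two-dimensional subspace $\mathrm{span}_{\mathbb{R}}(\phi_0,i\phi_0)$. As motivation, at $z=0$ one has $DQ[0]=(1,i)\phi_0$ by Lemma \ref{l1}, and unpacking the real inner product in terms of real and imaginary parts of $\int\bar{\eta}\phi_0\,dx$ shows $\mathcal{H}_c[0]=\{\eta\in L^2:\int\bar{\eta}\phi_0\,dx=0\}=\mathrm{Ran}\,P_c$, so $R[0]=I$ is forced.

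Concretely, for $\xi\in\mathrm{Ran}\,P_c$ I would try the ansatz
\begin{equation}
R[z]\xi := \xi + \alpha(z,\xi)\phi_0 + \beta(z,\xi)i\phi_0,
\end{equation}
and pick $(\alpha,\beta)\in\mathbb{R}^2$ to enforce the two orthogonality conditions defining $\mathcal{H}_c[z]$. Substituting yields the $2\times 2$ real linear system
\begin{equation}
M(z)\begin{pmatrix}\alpha\\ \beta\end{pmatrix} = -\begin{pmatrix}\langle i\xi,D_1Q[z]\rangle\\ \langle i\xi,D_2Q[z]\rangle\end{pmatrix}, \quad M(z) := \begin{pmatrix}\langle i\phi_0,D_1Q[z]\rangle & -\langle\phi_0,D_1Q[z]\rangle\\ \langle i\phi_0,D_2Q[z]\rangle & -\langle\phi_0,D_2Q[z]\rangle\end{pmatrix}.
\end{equation}
At $z=0$ a direct computation gives $M(0)=\bigl(\begin{smallmatrix}0 & -1\\ 1 & 0\end{smallmatrix}\bigr)$, so by continuity of $z\mapsto DQ[z]$ in $H^2$ (Lemma \ref{l1}) the matrix $M(z)$ is invertible for $|z|\le\delta$ with $\delta>0$ small, and $(\alpha,\beta)$ depend linearly on $\xi$ and continuously on $z$.

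Next I would check bijectivity and the inverse identity. Injectivity is immediate from $\mathrm{Ran}\,P_c\cap\mathrm{span}_{\mathbb{R}}(\phi_0,i\phi_0)=\{0\}$. For surjectivity, any $\eta\in\mathcal{H}_c[z]$ decomposes as $\eta=P_c\eta+a\phi_0+bi\phi_0$; substituting into the two defining conditions of $\mathcal{H}_c[z]$ reproduces the system above with $\xi=P_c\eta$, so uniqueness of $M(z)^{-1}$ gives $(a,b)=(\alpha(z,P_c\eta),\beta(z,P_c\eta))$, whence $R[z]P_c\eta=\eta$. This establishes $P_c|_{\mathcal{H}_c[z]}=R[z]^{-1}$.

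Finally, extend $R[z]$ to all of $Y$ by the same formula. Compactness of $R[z]-I$ is automatic since the operator has rank at most two. The outputs $\phi_0,i\phi_0$ lie in $H^2\cap W^{1,1}$, the smallest admissible $Y$, hence in every admissible $Y$. By the exponential decay of $Q[z]$ in Lemma \ref{l1}, each $D_jQ[z]\in H^2\cap L^1\subset(H^{-2}+L^\infty)^*$, so the functionals $\xi\mapsto\langle i\xi,D_jQ[z]\rangle$ are bounded on every admissible $Y$. Combined with the smooth $z$-dependence of $M(z)^{-1}$, this yields boundedness and operator-norm continuity in $z$ on each $Y$. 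The mildly subtle point here is uniformity of these bounds across the entire interpolation scale of $Y$; it is resolved precisely because $R[z]-I$ factors through rank-two maps with exponentially localized ingredients at both ends, so the bound uses nothing finer than the endpoints $H^2\cap W^{1,1}$ and $(H^{-2}+L^\infty)^*$.
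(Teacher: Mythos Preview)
Your argument is correct and is essentially the standard construction: the paper does not give its own proof here but simply cites Lemma~2.2 of \cite{GNT}, and the rank-two perturbation ansatz $R[z]\xi=\xi+\alpha\phi_0+\beta i\phi_0$ with the $2\times2$ system governed by $M(z)$ is exactly how that lemma is proved there. One minor remark: your identification of $(H^{-2}+L^\infty)^*$ with $H^2\cap L^1$ is not literally correct as a dual-space statement, but the conclusion you draw---that $D_jQ[z]\in H^2\cap L^1$ suffices to bound the functionals on any $Y\subset H^{-2}+L^\infty$---is valid, since one estimates each summand in a decomposition $\xi=\xi_1+\xi_2$ separately.
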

The proof of lemma \ref{l8} is given in lemma 2.2 of \cite{GNT}. We will use lemma \ref{l8} with $Y = L^2$.

\section{Proof of the main theorem} \label{secMain}
Lemma \ref{l1} gives the following corollary which will form part of the main theorem. 
\begin{lemma} \label{l1c1} (Best decomposition)
There exists $ \delta > 0 $ such that any $ \psi \in H^1 $ satisfying $ \| \psi \|_{H^1} \leq \delta $ can be uniquely decomposed as
\begin{equation} \psi = Q[z] + \eta \end{equation}
where $ z \in \CC $, $ \eta \in H^1 $, $ \ll i \eta, D_1 Q[z] \rr = \ll i \eta, D_2Q[z] \rr = 0 $, and $ |z| + \| \eta \|_{H^1} \lesssim \| \psi \|_{H^1} $.
\end{lemma}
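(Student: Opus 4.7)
The natural approach is an implicit function theorem argument centered at $(z,\psi)=(0,0)$. Define a map $F:\CC\times H^1\to\RR^2$ by
\begin{equation}
F(z,\psi)=\bigl(\ll i(\psi-Q[z]),\,D_1Q[z]\rr,\;\ll i(\psi-Q[z]),\,D_2Q[z]\rr\bigr).
\end{equation}
A decomposition $\psi=Q[z]+\eta$ with the two orthogonality conditions holds precisely when $F(z,\psi)=0$, so uniqueness and existence of $z=z(\psi)$ near $\psi=0$ reduces to inverting $F(\cdot,\psi)=0$ in $z$ for each small $\psi$.

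First I would record regularity: by Lemma \ref{l1}, $Q[z]$, $DQ[z]$ and $D^2Q[z]$ are continuous functions of $z$ into $H^2\subset H^1$ near $z=0$, with $Q[0]=0$, $D_1Q[0]=\phi_0$, $D_2Q[0]=i\phi_0$, and $D^2Q[z]=o(1)$. Combined with the boundedness of $\ll\cdot,\cdot\rr$ on $L^2\times L^2$, this makes $F$ continuously differentiable in $(z,\psi)$ on a neighborhood of the origin. Then I would compute the differential $\partial_zF(0,0)$. Using $\ll\phi_0,\phi_0\rr=\|\phi_0\|_{L^2}^2=1$ and the reality of $\phi_0$, one finds
\begin{equation}
\partial_zF(0,0)=\begin{pmatrix}\ll -i\phi_0,\phi_0\rr & \ll\phi_0,\phi_0\rr \\ \ll -i\phi_0,i\phi_0\rr & \ll\phi_0,i\phi_0\rr\end{pmatrix}=\begin{pmatrix}0 & 1\\ -1 & 0\end{pmatrix},
\end{equation}
which is invertible (any $o(z)$ contributions coming from $D^2Q[z]$ vanish at $z=0$). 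Since $F(0,0)=0$ and $\partial_zF(0,0)$ is an isomorphism $\RR^2\to\RR^2$, the implicit function theorem produces $\delta>0$ and a $C^1$ map $\psi\mapsto z(\psi)$ defined for $\|\psi\|_{H^1}\le\delta$, with $z(0)=0$, that is the unique small solution of $F(z,\psi)=0$. Setting $\eta=\psi-Q[z(\psi)]$ yields the desired decomposition.

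For the quantitative bound, I would note that $\partial_\psi F(0,0)$ is bounded $H^1\to\RR^2$, so the IFT gives $|z(\psi)|\lesssim\|\psi\|_{H^1}$ for $\psi$ small. Combined with $Q[z]=z\phi_0+o(z^2)$ from Lemma \ref{l1}, this gives $\|Q[z(\psi)]\|_{H^1}\lesssim|z(\psi)|\lesssim\|\psi\|_{H^1}$ and hence $\|\eta\|_{H^1}\le\|\psi\|_{H^1}+\|Q[z(\psi)]\|_{H^1}\lesssim\|\psi\|_{H^1}$, yielding $|z|+\|\eta\|_{H^1}\lesssim\|\psi\|_{H^1}$ as claimed.

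The only slightly delicate point is making the differential computation honest: one must verify that the $o(z)$ error in $DQ[z]=(1,i)\phi_0+o(z)$ and the $o(1)$ behavior of $D^2Q[z]$ provided by Lemma \ref{l1} are strong enough (e.g.\ in $L^2$ or $H^2$) to legitimately take partial derivatives of $F$ and to conclude that $\partial_zF$ is continuous at the origin. This is a routine bookkeeping step rather than a genuine obstacle, and everything else is a direct invocation of the implicit function theorem.
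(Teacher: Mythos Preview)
Your proposal is correct and follows exactly the approach the paper indicates: the paper defines the same map $B_j(z)=\ll i(\psi-Q[z]),D_jQ[z]\rr$ and says the result is ``essentially an application of the implicit function theorem'' on $B(z)=0$, referring to \cite{GNT} for details. You have simply carried out the Jacobian computation and the size estimate that the paper leaves to the reference, and your computation of $\partial_zF(0,0)$ is correct.
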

The proof of lemma \ref{l1c1} is essentially an application of the implicit function theorem on the equation $ B(z) = 0 $ with
\begin{equation} B(z) = (B_1(z), B_2(z)), \;\; B_j = \ll i(\psi - Q[z]), D_j Q[z] \rr \;\; \text{for} \;\; j = 1,2  .\end{equation}
Details can be found in lemma 2.3 of \cite{GNT}.

Now, we prove theorem \ref{t1}.
\begin{proof}
Substitute
\begin{equation} \psi(t) = Q[z(t)] + \eta(t) \end{equation}
into equation \eqref{E1} to get
\begin{eqnarray}
i(DQ \dot{z} + \pp_t \eta) & = & HQ + H \eta + g(Q+\eta)
\end{eqnarray}
where for $ w \in \CC $, we denote $ DQ[z]w = D_1 Q[z] \Re w + D_2 Q[z] \Im w $.
Since $ HQ + g(Q) = EQ $ and $ DQ[z]iz = iQ[z] $ (since $ Q[e^{i \alpha} z] = e^{i \alpha} Q[z] $ for $ \alpha \in \RR $), we have
\begin{eqnarray}
i \pp_t \eta & = &  H \eta - i DQ \dot{z} + EQ - g(Q) + g(Q + \eta) \\
             & = &  H \eta - i DQ(\dot{z}+iEz) - g(Q) + g(Q + \eta).
\end{eqnarray}
We can write this as 
\begin{equation} i \pp_t \eta  = H \eta + F \end{equation}
where
\begin{equation} F = g(Q + \eta) - g(Q) - i DQ(\dot{z} + iEz) .\end{equation}
In integral form,
\begin{equation} \eta(t) = e^{-itH}(\eta(0) - i \int_0^t e^{isH} F(s) ds) .\end{equation}
Let $ \eta_c = P_c \eta $. Then 
\begin{equation} \eta_c = e^{-itH} P_c \eta(0) - i \int_0^t e^{i(t-s)H} P_c F(s) ds  .\end{equation}
Then for fixed $ \sigma > 4$, since $ \eta = \Re [z] \eta_c $, we have
 \begin{eqnarray}
 \| \eta \|_X
 & \lesssim & \| \eta_c \|_X \\
 & \lesssim & \| \eta(0) \|_{H_x^1} 
                  + \| \int_0^t e^{-i(s-t)H} P_c (F(s) - 2Q |\eta|^2 - \bar{Q} \eta^2 -|\eta|^2 \eta) ds\|_X \\ 
 &          &     + \| \int_0^t e^{-i(s-t)H} P_c (2 Q |\eta|^2 + \bar{Q} \eta^2 + |\eta|^2 \eta) ds \|_X \\
 & \lesssim & \| \eta(0) \|_{H_x^1} + \| \int_0^t e^{-i(s-t)H} P_c (F(s) - 2Q |\eta|^2 - \bar{Q} \eta^2 -|\eta|^2 \eta) ds\|_X \\
 &          &                         + \| Q \eta^2 \|_{L_t^{\frac{3}{2}} W_x^{1,\frac{18}{13}}}
                                      + \| \eta^3 \|_{L_t^{\frac{3}{2}} W_x^{1,\frac{18}{13}}} .\\
 \end{eqnarray}
For $ \| Q \eta^2 \|_{L_t^{\frac{3}{2}} W_x^{1,\frac{18}{13}}} $, we have
\begin{eqnarray}
  \| Q \eta^2 \|_{L_t^{\frac{3}{2}} W_x^{1,\frac{18}{13}}}
  &    =     &  \| Q \eta^2 \|_{L_t^{\frac{3}{2}} L_x^{\frac{18}{13}}} + \| \nabla (Q \eta^2) \|_{L_t^{\frac{3}{2}} L_x^{\frac{18}{13}}} \\
  & \lesssim &  \| (|Q| + |\nabla Q|) \eta^2 \|_{L_t^{\frac{3}{2}} L_x^{\frac{18}{13}}} 
                + \| Q \eta \nabla \eta \|_{L_t^{\frac{3}{2}} L_x^{\frac{18}{13}}} \\
  & \lesssim & \| Q \|_{L_t^\infty W_x^{1,6}} \| \eta \|^2_{L_t^{3} L_x^{\frac{18}{5}}} 
                + \| Q \|_{L_t^\infty L_x^6} \| \eta \|_{L_t^{3} L_x^{\frac{18}{5}}} \| \nabla \eta \|_{L_t^{3} L_x^{\frac{18}{5}}} \\
  & \lesssim & \| Q \|_{W_x^{1,6}} \| \eta \|_X^2  .         
\end{eqnarray}
For $ \| \eta^3 \|_{L_t^{\frac{3}{2}} W_x^{1,\frac{18}{13}}} $, we have
\begin{eqnarray}
  \| \eta^3 \|_{L_t^{\frac{3}{2}} W_x^{1,\frac{18}{13}}}
  &    =    &  \| \eta^3 \|_{L_t^{\frac{3}{2}} L_x^{\frac{18}{13}}} + \| \nabla \eta^3 \|_{L_t^{\frac{3}{2}} L_x^{\frac{18}{13}}} \\
  & \lesssim &  \| \eta^3 \|_{L_t^{\frac{3}{2}} L_x^{\frac{18}{13}}} 
                + \| \eta^2 \nabla \eta \|_{L_t^{\frac{3}{2}} L_x^{\frac{18}{13}}} \\
  & \leq    &  \| \eta^2 \|_{L_t^{3} L_x^{\frac{9}{4}}} \| \eta \|_{L_t^3 L_x^{\frac{18}{5}}} 
                + \| \eta^2 \|_{L_t^{3} L_x^{\frac{9}{4}}}  \| \nabla \eta \|_{L_t^{3} L_x^{\frac{18}{5}}} \\
  & \leq    &  \| \eta \|_{L_t^6 L_x^{\frac{9}{2}}}^2 \| \eta \|_{L_t^3 W_x^{1,\frac{18}{5}}} .
\end{eqnarray}
Now, using $ \| \eta \|_{L_x^{\frac{9}{2}}} \lesssim \| \nabla \eta \|^{\frac{1}{2}}_{L_x^2} \| \eta \|^{\frac{1}{2}}_{L_x^{\frac{18}{5}}} $, we get
\begin{equation} \| \eta \|_{L_t^{6} L_x^{\frac{9}{2}}} \lesssim \| \nabla \eta \|_{L_t^\infty L_x^2}^{\frac{1}{2}} \| \eta \|_{L_t^3 L_x^{\frac{18}{5}}}^{\frac{1}{2}}  .\end{equation}
So
\begin{equation} 
  \| \eta^3 \|_{L_t^{\frac{3}{2}} W_x^{1,\frac{18}{13}}}
  \lesssim \| \nabla \eta \|_{L_t^\infty L_x^2} \| \eta \|_{L_t^3 W_x^{1,\frac{18}{5}}}^2 \lesssim \| \eta \|_X^3 . 
\end{equation}
Together we have
\begin{eqnarray}
  \| \eta \|_X
  & \lesssim & \| \eta(0) \|_{H_x^1} + \| \int_0^t e^{-i(s-t)H} P_c (F(s) - 2 Q |\eta|^2 - \bar{Q} \eta^2 -|\eta|^2 \eta) ds\|_X \\
  &          &                         + \| Q \|_{W_x^{1,6}} \| \eta \|_X^2 + \| \eta \|_X^3 \\
  & \lesssim & \| \eta(0) \|_{H_x^1} + \| (F(s) - 2 Q | \eta |^2 - \bar{Q} \eta^2 - |\eta|^2 \eta) \|_{L_t^2 \aa H_x^1} \\
  &          &    + \| \eta \|_X^2 + \| \eta \|_X^3
\end{eqnarray}

Next, for $ g(\psi) = |\psi|^2 \psi $,
 \begin{eqnarray}
 &          & \| (F - 2 Q |\eta|^2 - \bar{Q} \eta^2 - |\eta|^2 \eta) \|_{L_t^2 \aa H_x^1} \\
 & =        & \| Q^2 \bar{\eta} + 2 |Q|^2 \eta - i DQ(\dot{z} + iEz) \|_{L_t^2 \aa H_x^1} \\
 & \lesssim & \| \ll x \rr^{2 \sigma} Q^2 \|_{W_x^{1,\infty}} \| \eta \|_{L_t^2 \invaa H_x^1} 
               + \| DQ \|_{\aa H_x^1} \| \dot{z} + iEz \|_{L_t^2} .
 \end{eqnarray}
Next, we would like to bound $ (\dot z + i Ez) $. Recall that we imposed
\begin{equation} \ll i \eta, \frac{\pp}{\pp z_1} Q[z] \rr = 0 \;\;\; \text{and} \;\;\; \ll i \eta, \frac{\pp}{\pp z_2} Q[z] \rr = 0 \end{equation}
through Lemma \ref{l1c1}. By Gauge covariance of $ Q $, we have
\begin{equation} Q[e^{i \alpha} z] = e^{i \alpha} Q[z] .\end{equation}
So for $ z = z_1 + i z_2 $,
\begin{equation} Q[z] = e^{i \alpha} \tilde{Q}[|z|^2] \;\;\; \text{where} \;\; \alpha = \tan^{-1}(\frac{z_2}{z_1}) .\end{equation}
Here $ \tilde{Q}: \RR^+ \rightarrow \RR $. So
\begin{equation} \pp_{z_1}Q = \pp_{z_1}(e^{i\alpha}) \tilde{Q} + 2 z_1 e^{i \alpha} \tilde{Q}' 
              = e^{i\alpha} i (\pp_{z_1}\alpha) \tilde{Q} + 2 z_1 e^{i \alpha} \tilde{Q}' 
              = i (\pp_{z_1}\alpha) Q + 2 z_1 e^{i \alpha} \tilde{Q}' \end{equation}
and
\begin{equation} \pp_{z_2}Q = \pp_{z_2}(e^{i\alpha}) \tilde{Q} + 2 z_2 e^{i \alpha} \tilde{Q}' 
              = e^{i\alpha} i (\pp_{z_2}\alpha) \tilde{Q} + 2 z_2 e^{i \alpha} \tilde{Q}' 
              = i (\pp_{z_2}\alpha) Q + 2 z_2 e^{i \alpha} \tilde{Q}' .\end{equation}
So
\begin{eqnarray} 
  0 & = & \ll i \eta, -z_2 \pp_{z_1}Q + z_1 \pp_{z_2}Q \rr = \ll \eta, -z_2 (\pp_{z_1}\alpha) Q + z_1 (\pp_{z_2}\alpha) Q \rr \\
    & = & (-z_2 (\pp_{z_1}\alpha) + z_1 (\pp_{z_2}\alpha))  \ll \eta, Q \rr = \ll \eta, Q \rr.
\end{eqnarray}
Now differentiate $ \ll i \eta, \frac{\pp}{\pp z_1} Q[z] \rr = 0 $ and $ \ll i \eta, \frac{\pp}{\pp z_2} Q[z] \rr = 0 $ with respect to $t$ and substitute $ i \pp_t \eta  = H \eta + F $, we get
\begin{eqnarray}
 0 & = & \ll i \pp_t \eta, \frac{\pp}{\pp z_j} Q[z] \rr + \ll i \eta,  D  \frac{\pp}{\pp z_j} Q \dot{z} \rr \\
   & = & \ll H \eta + F, \frac{\pp}{\pp z_j} Q[z] \rr + \ll i \eta,  D  \frac{\pp}{\pp z_j} Q \dot{z} \rr \\
\end{eqnarray}
Recall that $ F = g(Q + \eta) - g(Q) - i DQ(\dot{z} + iEz) $. Therefore, we have
\begin{eqnarray}
 0 & = & \ll  H \eta + g(Q + \eta) - g(Q) - i DQ(\dot{z} + iEz), \frac{\pp}{\pp z_j} Q[z] \rr + \ll i \eta,  D  \frac{\pp}{\pp z_j} Q \dot{z} \rr \\
   & = &  \ll (H \eta + \frac{\pp}{\pp \ee} g(Q + \ee \eta)|_{\ee = 0} ) + (g(Q + \eta) - g(Q) - \pp_{\ee}^0 g(Q + \ee \eta) ) \\
   &   & - i DQ(\dot{z} + iEz), \frac{\pp}{\pp z_j} Q[z] \rr 
          + \ll i \eta,  D  \frac{\pp}{\pp z_j} Q \dot{z} \rr \\
\end{eqnarray}
From the above, we get that
\begin{eqnarray}
 & & \ll (g(Q + \eta) - g(Q) - \pp_{\ee}^0 g(Q + \ee \eta) ), \frac{\pp}{\pp z_j} Q[z] \rr \\
 & = & \ll - i DQ(\dot{z} + iEz), \frac{\pp}{\pp z_j} Q[z] \rr \\
 & & + \ll (H \eta + \pp_{\ee}^0 g(Q + \ee \eta) ), \frac{\pp}{\pp z_j} Q[z] \rr \\
 & & + \ll i \eta,  D  \frac{\pp}{\pp z_j} Q \dot{z} \rr .\\
\end{eqnarray}
Let $ \HH \eta = H \eta + \pp_{\ee}^0 g(Q + \ee \eta)$. By the symmetry of $ \HH$ and differentiating equation (\ref{eNEP}) by $ z_j $, we have
\begin{eqnarray}
 \ll \HH \eta,\frac{\pp}{\pp z_j} Q \rr = \ll \eta, \HH \frac{\pp}{\pp z_j} Q \rr 
   & = & \ll \eta, E  \frac{\pp}{\pp z_j} Q \rr + (\frac{\pp}{\pp z_j} E) \ll \eta, Q \rr \\
   & = & \ll \eta, E  \frac{\pp}{\pp z_j} Q \rr = \ll i \eta, i E  \frac{\pp}{\pp z_j} Q \rr \\
   & = & \ll i \eta, E  \frac{\pp}{\pp z_j} DQiz \rr
\end{eqnarray}
using $  \ll \eta, Q \rr = 0 $ and $ DQ[z]iz = iQ[z]$.
So 
\begin{eqnarray}
& & \ll (g(Q + \eta) - g(Q) - \pp_{\ee}^0 g(Q + \ee \eta) ), \frac{\pp}{\pp z_j} Q[z] \rr \\ 
& = &  \ll - i DQ(\dot{z} + iEz), \frac{\pp}{\pp z_j} Q[z] \rr
 + \ll i \eta, E  \frac{\pp}{\pp z_j} DQiz \rr + \ll i \eta,  D  \frac{\pp}{\pp z_j} Q \dot{z} \rr \\
& = &  \ll - i DQ(\dot{z} + iEz), \frac{\pp}{\pp z_j} Q[z] \rr
 + \ll i \eta, (D \frac{\pp}{\pp z_j}Q) (\dot{z}+iEz) \rr \\
\end{eqnarray}

For $ g(\psi) = | \psi |^2 \psi $, 
\begin{equation} \pp_{\ee}^0 g(Q + \ee \eta) = Q^2 \bar{\eta} + 2 |Q|^2 \eta .\end{equation}
Therefore,
\begin{eqnarray} 
  g(Q + \eta) - g(Q) - \pp_{\ee}^0 g(Q + \ee \eta) & = & | Q + \eta |^2 (Q + \eta) -|Q|^2 Q - Q^2 \bar{\eta} - 2 |Q|^2 \eta \\ 
                                                   & = & 2 Q |\eta|^2 + \bar{Q} \eta^2 + |\eta|^2 \eta \\
\end{eqnarray}

Since
\begin{equation} \ll \frac{\pp}{\pp z_j} Q, i \frac{\pp}{\pp z_k} Q \rr = j - k + o(1), \end{equation} 
we have that
\begin{equation} |\dot{z} + iEz| \lesssim | \ll 2 Q |\eta|^2 + \bar{Q} \eta^2 + |\eta|^2 \eta, DQ \rr | (1 + \| \eta \|_{L^2}) .\end{equation}
Therefore,
\begin{eqnarray}
&          & \| \dot{z} + iEz \|_{L_t^2} \\
& \lesssim & \| \ll 2 Q |\eta|^2 + \bar{Q} \eta^2 + |\eta|^2 \eta, DQ \rr \|_{L_t^2} (1 + \| \eta \|_{L_t^{\infty} L_x^2}) \\
& \lesssim & ( \| Q DQ |\eta|^2 \|_{L_t^2 L_x^1} + \| DQ |\eta|^2 \eta \|_{L_t^2 L_x^1}) (1 + \| \eta \|_{L_t^{\infty} L_x^2})\\ 
& \leq     & ( \| Q DQ \|_{L_t^{\infty} L_x^2} \| \eta \|_{L_t^4 L_x^4}^2  
             + \| DQ \|_{L_t^{\infty} L_x^4} \| \eta \|_{L_t^6 L_x^4}^3 ) (1 + \| \eta \|_{L_t^{\infty} L_x^2}) \\ 
& \leq     & ( \| Q DQ \|_{L_t^{\infty} L_x^2} \| \eta \|_{L_t^{\infty} H^1}^{\frac{1}{2}} \| \eta \|_{L_t^3 
L_x^{\frac{18}{5}}}^{\frac{3}{2}}
             + \| DQ \|_{L_t^{\infty} L_x^4} \| \eta \|_{L_t^{\infty} H^1}^{\frac{5}{4}} \| \eta \|_{L_t^{\frac{7}{2}} 
L_x^{\frac{42}{13}}}^{\frac{7}{4}} ) \\
&          &     (1 + \| \eta \|_{L_t^{\infty} L_x^2}) \\ 
& \lesssim & \| \eta \|_X^2 + \| \eta \|_X^4          
\end{eqnarray}
For $ \| \eta \|_{L_t^4 L_x^4} $, we used
\begin{equation}  \| \eta \|_{L_x^4} \lesssim \| \nabla \eta \|_{L_x^2}^{\frac{1}{4}}  \| \eta \|_{L_x^{\frac{18}{5}}}^{\frac{3}{4}} .\end{equation}
For $ \| \eta \|_{L_t^6 L_x^4} $, we used
\begin{equation}  \| \eta \|_{L_x^4} \lesssim \| \nabla \eta \|_{L_x^2}^{\frac{5}{12}}  \| \eta \|_{L_x^{\frac{42}{13}}}^{\frac{7}{12}} .\end{equation}

Putting the preceding estimates together we have
\begin{equation} \label{R1} 
 \| \eta \|_X \lesssim \| \eta(0) \|_{H^1} + \| \ll x \rr^{2 \sigma} Q^2 \|_{L_x^{\infty}} \| \eta \|_X + \| \eta \|_X^2 + \| \eta \|_X^4 ,
\end{equation}
and since $ \| \ll x \rr^{2 \sigma} Q^2 \|_{L_x^{\infty}} << 1 $,
\begin{equation} \| \eta \|_X \leq C [\| \eta(0) \|_{H^1} + \| \eta \|_X^2 + \| \eta \|_X^4 ] \end{equation}
for some constant $ C \geq 1 $.

Now, let $ X_T $ be the norm defined by
\begin{eqnarray}
  \| \psi \|_{X_T}  
  & = & \| \aa \psi \|_{L_t^2([0,T], H_x^1)} + \| \psi \|_{L_t^{3}([0,T],W_x^{1,\frac{18}{5}}) } + \| \psi \|_{L_t^{\infty} ([0,T],H_x^1)}
\end{eqnarray}
Fix the initial condition $ \| \psi(0) \|_X $ to be small enough so that
\begin{equation} \| \eta(0) \|_{H^1} \leq \frac{1}{20C^2} .\end{equation} 
Let
\begin{equation} T_1 = \sup \{ T>0 : \| \eta \|_{X_T} \leq \frac{1}{10C} \} > 0 .\end{equation}
Then for $ 0 \leq T \leq T_1 $,
\begin{equation} \label{R2} 
 \| \eta \|_{X_T} \leq \frac{1}{20C} + \frac{1}{10^2C} + \frac{1}{10^4C^3} \leq \frac{1}{15C},
\end{equation}
showing that $ T_1 = \infty $.

Next, we would like to bound $\| \dot{z} + iEz \|_{L_t^1}$. We have
\begin{eqnarray}
&          &  \| \dot{z} + iEz \|_{L_t^1} \\
& \lesssim & \| \ll 2 Q |\eta|^2 + \bar{Q} \eta^2 + |\eta|^2 \eta, DQ \rr (1 + \| \eta \|_{L_x^2}) \|_{L_t^1} \\
& \lesssim & ( \| Q DQ |\eta|^2 \|_{L_t^1 L_x^1} + \| DQ |\eta|^2 \eta \|_{L_t^1 L_x^1}) (1 + \| \eta \|_{L_t^\infty L_x^2}) \\ 
&   \leq   & ( \| \invsqaa Q DQ \|_{L_t^{\infty} L_x^{\infty}} \| \ll x \rr^{-2\sigma} \eta^2 \|_{L_t^1 L_x^1} 
                + \| \invaa DQ \|_{L_t^{\infty} L_x^{\infty}} \| \aa  \eta^3 \|_{L_t^1 L_x^1} ) \\
&          &    (1 + \| \eta \|_{L_t^\infty L_x^2}) 
\end{eqnarray}
Here, the factor $\| \aa  \eta^3 \|_{L_t^1 L_x^1}$ can be bounded by 
\begin{eqnarray}
  \| \aa  \eta^3 \|_{L_t^1 L_x^1} 
  \leq  \| \aa  \eta \|_{L_t^2 L_x^2} \| \eta \|_{L_t^4 L_x^4}^2) 
  \lesssim \| \aa \eta \|_{L_t^2 L_x^2} \| \eta \|_{L_t^{\infty} H^1}^{\frac{1}{2}} \| \eta \|_{L_t^3 L_x^{\frac{18}{5}}}^{\frac{3}{2}}.
\end{eqnarray}
Putting everything together, we have
\begin{eqnarray}
 \| \dot{z} + iEz \|_{L_t^1} \lesssim \| \eta \|_X^2 + \| \eta \|_X^4
\end{eqnarray}

Therefore, $ | \partial_t (e^{i \int_0^t E(s) ds} z(t)) |  = | \dot{z} + iEz | \in L_t^1 $. This means that $ \lim_{t \rightarrow \infty} e^{i \int_0^t E(s) ds} z(t) $ exists. Since $ | e^{i \int_0^t E(s) ds} z(t) | = |z| $, $ \lim_{t \rightarrow \infty} |z(t)| $ exists. Furthermore, $ E $ is continuous and $ E(z) = E(|z|)$, so $ \lim_{t \rightarrow \infty} E(z(t)) $ exists.

Finally, let $ H = - \Delta + 2i A \cdotp \nabla + i(\nabla \cdotp A) + V$. So
\begin{equation} \eta_c(t) = e^{itH} (\eta_c(0) - i \int_0^t e^{-isH} P_c F(s) ds).\end{equation}
By Strichartz estimates as above, we have
\begin{equation} \| \int_S^T e^{-isH} P_c F(s) ds \|_{H^1} \lesssim \| F \|_X \rightarrow 0 \end{equation}
as $ T > S \rightarrow \infty$.
Therefore, $ \int_0^{\infty} e^{-isH} P_c F ds$ converges in $ H^1 $, and
\begin{equation} \lim_{t \rightarrow \infty} e^{-itH} \eta_c(t) = \eta_c(0) - i \int_0^{\infty} e^{-isH} P_c F(s) ds =: \eta_+ \end{equation}
for some $ \eta_+ \in H^1 $.
From this, we get that $ \eta_c(t) $ converges to 0 weakly in $ H_1$. Now, by compactness of $ R[z(t)] - I$, we have that $ \eta_d(t) := \eta(t) - \eta_c(t) = (R[z(t)] - I) \eta_c(t)$ converges to 0 strongly in $ H^1$. Therefore
\begin{equation} \| \eta(t) - e^{itH} \eta_+ \|_{H^1} \rightarrow 0 .\end{equation}
\end{proof}
\appendix
\section{Nonlinear bound states}
The following is the proof for Lemma \ref{l1}, the existence and exponential decay of nonlinear bound states. \\
\textit{Proof of existence of nonlinear bound states:}

For each small $ z \in \CC $, we look for a solution 
\begin{equation} Q = z \phi_0 + q \;\; \text{and} \;\; E = e_0 + e' \end{equation}
of  
\begin{equation} \label{l1e1}
 (- \Delta + 2 i A \cdotp \nabla + i(\nabla \cdotp A) + V)Q + g(Q) = EQ 
\end{equation}
with $ (\phi_0, q) = 0 $ and $ e' \in \RR $ small.

Let $ H_0 = - \Delta + 2i A \cdotp \nabla + i(\nabla \cdotp A) + V - e_0 $. If we substitute $ Q = z \phi_0 + q $ and $ E = e_0 + e' $ into equation (\ref{l1e1}), we get
\begin{equation} \label{l1e2}
 H_0 q + g(z \phi_0 + q) = e'(z \phi_0) + e' q .
\end{equation}

Projecting equation (\ref{l1e2}) on the $ \phi_0 $ and $ \phi_0^{\perp}$ directions, we get
\begin{equation} e' z = (\phi_0, g(z \phi_0 + q)) \end{equation}
and
\begin{equation} H_0 q = - P_c g(z \phi_0 + q) + e'q.\end{equation}

Now, let
\begin{equation} K = \{ (q,e') \in H_{\perp}^2 \times \RR | \| q \|_{H^2} \leq |z|^2, |e'| \leq |z| \} \end{equation}
for sufficiently small $ z \in \CC $ where $ H_{\perp}^2 = \{q \in H^2| (q, \phi_0) = 0 \} $. Also, define the map $ M: (q_0, e'_0) \mapsto (q_1, e'_1) $ by
\begin{equation} g_0 := g(z \phi_0 + q_0) ,\end{equation}
\begin{equation} z e'_1 := (\phi_0, g_0) \end{equation}
and
\begin{equation} q_1 := H_0^{-1}(-P_c g_0 + e'_0 q_0) .\end{equation} 

Now if $ (q_0, e'_0) \in K $, we have
\begin{equation} |ze'_1| = |(\phi_0,g_0)| = |(\phi_0, g(z\phi_0+q_0))| = |(\phi_0, |z \phi_0 + q_0 |^2 (z \phi_0 + q_0))| \lesssim O(z^3)\end{equation}
and
\begin{equation} \| q_1 \|_{H^2} \lesssim \| -P_c g_0 + e'_0 q_0 \|_{L^2} \leq \| g_0 \|_{L^2} + |e'_0| \| q_0 \|_{H^2} \lesssim O(z^3) .\end{equation}
Therefore, $ | e_1'| \lesssim O(z^2) $ and $ \| q_1 \|_{H^2} \lesssim O(z^3) $. 
This shows that $ M $ maps $ K $ into $ K $ for sufficiently small $z$.

Next, we would like to show that $ M $ is a contraction mapping.
Let $ (a_1, b_1) := M(q_0, e'_0) $ and $ (a_2, b_2) := M(q_1, e'_1) $ with $ g_j = g(z \phi_0 + q_j)$ for $ j = 0,1$. Then
\begin{eqnarray}
 |z(b_2 - b_1)| & = & |(\phi_0, g_0-g_1)| \\
                  & = & |(\phi_0,  g(z\phi_0+q_0) - g(z \phi_0 + q_1))| \\
                  & = & |(\phi_0, |z\phi_0+q_0|^2 (z \phi_0 + q_0) - |z \phi_0 + q_1|^2 (z \phi_0 + q_1) )| \\
                & \lesssim & \int \phi_0 (|z|^2 \phi_0^2 + |q_0|^2 + |q_1|^2) |q_0 - q_1| \lesssim |z|^2 \| q_0 - q_1 \|_{L^2}.
\end{eqnarray}
As $a_i = H_0^{-1}(-P_c g_{i-1} + e_{i-1}' q_{i-1})$ for $i = 1,2$ and $\| H_0^{-1}\|_{L^2 \to H^2} \leq \infty$, we have
\begin{eqnarray}
  \| a_1 -a_2 \|_{H^2}
  & \lesssim & \|P_c (g_1 - g_0) + e_0' q_0 - e_1' q_1 \|_{L^2} \\
  & \lesssim & \| g_1 - g_0 \|_{L^2} + |e_0' - e_1'| \| q_0 \|_{L^2} + | e_1' | \| q_0 - q_1 \|_{L^2}.
\end{eqnarray}
Since 
\begin{eqnarray}
  \| g_1 - g_0 \|_{L^2} 
  & = & \| g(z \phi_0 + q_1) - g(z \phi_0 + q_0) \|_{L^2} \\
  & \lesssim & |z|^2 \| \phi_0^2 (q_1 - q_2)\|_{L^2} + |z| \| \phi_0 (q_1^2 - q_2^2)\|_{L^2} + \| q_1^3 - q_2^3 \|_{L^2}  \\
  & \lesssim & |z|^2 \| \phi_0^2 \|_{L^3} \| q_1 - q_2 \|_{L^6}
                                   + |z| \| \phi_0 \|_{L^6} \| q_1 + q_2 \|_{L^6} \| q_1 - q_2 \|_{L^6} \\
                      &          & + \| (|q_1|^2 + |q_1 q_2| + |q_2|^2)\|_{L^4} \| q_1 - q_2 \|_{L^4},
\end{eqnarray}
together, we have
\begin{eqnarray}
  \| a_1 -a_2 \|_{H^2} \lesssim |z| \| q_1 - q_2 \|_{H^2} + |z|^2 |e'_0 - e'_1|.
\end{eqnarray}

Hence, $ M $ is a contraction mapping for $z$ sufficiently small.
Now by the contraction mapping theorem, there exists a unique fixed point $ (q, e') $ satisfying $ \| q \|_{H^2} = O(z^3) $ and $ |e'| = O(z^2) $ 
as $ z \rightarrow 0 $. The statements about derivatives of $ Q $ and $ E $ with respect to $ z $ follow by differentiating (\ref{l1e2}) with respect to $ z $ and applying the contraction mapping principle again.
%

\textit{Proof of exponential decay:}

\begin{lemma} \label{lExDecay}
For $ \ee > 0$, define the exponential weight function $ \chi_R$ by
\begin{equation} 
  \chi_{R,\ee} = 
    \begin{cases} 
      e^{\ee(|x|-R)} -1  & \mbox{if } R < |x| \leq 2R, \\
      e^{\ee(3R-|x|)} -1 & \mbox{if } 2R < |x| < 3R,  \\
      0    & \mbox{else}
    \end{cases}
.\end{equation}

Suppose for $ \ee > 0 $ small enough, $ f \in H^1 $ satisfies
\begin{equation} \| \chi_{R,\ee} f \|_{H^1} \leq C \end{equation}
for some constant $ C $ independent of $ R $, then
\begin{equation} e^{\ee' |x|}f \in H^1 \end{equation}
for some $ \ee' > 0 $.
\end{lemma}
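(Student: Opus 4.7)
The plan is to convert the uniform $R$-bound into exponential decay of $f$ by observing that, on a carefully chosen sub-annulus lying strictly inside the support of $\chi_{R,\ee}$, the weight is bounded below by an exponential $\sim e^{\ee R/3}$. Varying $R$ and covering $\RR^3 \setminus B_{R_0}$ by such sub-annuli then upgrades the hypothesis into an $L^2$-integrable exponential weight for $f$ and $\nabla f$, at the cost of a strictly smaller rate $\ee' < \ee$.

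First, I would promote the hypothesis to uniform $L^2$-bounds on both $\chi_{R,\ee} f$ and $\chi_{R,\ee}\nabla f$. Since $\chi_{R,\ee}\nabla f = \nabla(\chi_{R,\ee} f) - f\,\nabla\chi_{R,\ee}$ and $|\nabla\chi_{R,\ee}| = \ee(\chi_{R,\ee}+1)\mathbf{1}_{\{R<|x|<3R\}}$ a.e., one gets $\|\chi_{R,\ee}\nabla f\|_{L^2} \lesssim C + \ee\|f\|_{H^1}$, uniformly in $R$.

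Next, consider the sub-annulus $A_R := \{4R/3 \leq |x| \leq 5R/3\}$, which sits inside the growing half of $\chi_{R,\ee}$. On $A_R$ one has $\chi_{R,\ee}(x) = e^{\ee(|x|-R)}-1 \geq \tfrac{1}{2}\,e^{\ee R/3}$ for $R$ sufficiently large, so the uniform bound gives
\[
 \int_{A_R}\bigl(|f|^2+|\nabla f|^2\bigr)\,dx \;\lesssim\; e^{-2\ee R/3}.
\]
Fix any $\ee' \in (0,\ee/5)$. Since $e^{2\ee'|x|} \leq e^{10\ee' R/3}$ on $A_R$, we obtain
\[
 \int_{A_R} e^{2\ee'|x|}\bigl(|f|^2+|\nabla f|^2\bigr)\,dx \;\lesssim\; e^{(10\ee'-2\ee)R/3},
\]
whose exponent is strictly negative.

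Finally, set $R_n := (5/4)^n R_0$ for $R_0$ large. Then $A_{R_n}$ and $A_{R_{n+1}}$ touch exactly at $|x| = 5R_n/3 = 4R_{n+1}/3$, so $\bigcup_n A_{R_n}$ covers $\{|x| \geq 4R_0/3\}$ with bounded overlap. Summing,
\[
 \int_{\{|x|\geq 4R_0/3\}} e^{2\ee'|x|}\bigl(|f|^2+|\nabla f|^2\bigr)\,dx \;\lesssim\; \sum_n e^{(10\ee'-2\ee)R_n/3} \;<\; \infty,
\]
since the exponent is negative and $R_n$ grows geometrically (the series in fact converges doubly exponentially). Combined with the trivial $H^1$-bound on $\{|x|<4R_0/3\}$ and $|\nabla(e^{\ee'|x|}f)|^2 \lesssim e^{2\ee'|x|}(|f|^2+|\nabla f|^2)$, this yields $e^{\ee'|x|} f \in H^1$. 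The main subtlety is choosing the sub-annulus and the geometric ratio of $R_n$ compatibly so that the lower bound on $\chi_{R,\ee}$, the covering with bounded overlap, and the negativity of $10\ee' - 2\ee$ all hold simultaneously; any constants respecting these constraints work, and the specific threshold $\ee/5$ is not optimized but suffices.
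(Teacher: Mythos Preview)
Your proof is correct and follows essentially the same strategy as the paper's: extract a pointwise lower bound $\chi_{R,\ee} \gtrsim e^{c\ee R}$ on a fixed sub-annulus of the support, turn the uniform hypothesis into exponential decay of $\|f\|_{H^1}$ on that annulus, and then cover $\{|x|\geq R_0\}$ by geometrically scaled annuli and sum. The only differences are cosmetic (you use the sub-annulus $[4R/3,5R/3]$ and ratio $5/4$ where the paper uses $[3R/2,2R]$ and ratio $4/3$, and you separate the $L^2$ control of $\chi_{R,\ee}\nabla f$ explicitly at the start rather than folding it into the $H^1$ norm), but the mechanism is identical.
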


\begin{proof}
For $ R > 0$, $ \| \chi_{R,\ee} f\|_{H^1} \leq C  $ implies that
\begin{equation} \| (e^{\ee(|x|-R)} -1 ) f \|_{H^1[\frac{3}{2} R,2R]} \leq C .\end{equation}
Since $ f \in H^1 $, 
\begin{equation} \| e^{\ee(|x|-R)} f \|_{H^1[\frac{3}{2} R,2R]} \leq C + \| f \|_{H^1} \leq C' .\end{equation}
$ e^{\frac{1}{2} \ee R} \leq e^{\ee(|x|-R)}$ for $ |x| \in [\frac{3}{2} R, 2R]$, so
\begin{equation} \| e^{\frac{1}{2} \ee R} f \|_{H^1[\frac{3}{2} R,2R]} \leq C' .\end{equation}
So 
\begin{equation} \| e^{(\frac{1}{2}(\frac{1}{2} \ee))(2R)} f \|_{H^1[\frac{3}{2} R,2R]} \leq C' .\end{equation}
Let $ \ee' = (\frac{1}{2}(\frac{1}{2} \ee )) $. Using $ e^{\ee' 2R} \geq e^{\ee' |x|} $ for $ |x| \in [\frac{3}{2} R,2R]$, we get that
\begin{equation} \label{*} \| e^{\ee' |x|} f \|_{H^1[\frac{3}{2} R,2R]} \leq C' \end{equation}
for some constant $ C' $ independent of $ R $.

Let $ \ee^{''} = \frac{1}{2} \ee' $. Then
\begin{equation} \| e^{\ee^{''} |x|}f \|_{H^1(|x| > 1)}^2 = \sum_{k=0}^{\infty} \| e^{\ee^{''} |x|} f \|^2_{H^1[\frac{2^{2k}}{3^{k}},\frac{2^{2(k+1)}}{3^{k+1}}]} .\end{equation}
Now, for each $k$, since $ e^{\ee'} = e^{\ee^{''}} e^{\ee^{''}} $, taking $ R = \frac{2^{2k+1}}{3^{k+1}}$ in (\ref{*}), we have
\begin{eqnarray} 
  C' 
  & \geq & \| e^{\ee' |x|} f \|_{H^1[\frac{2^{2k}}{3^{k}},\frac{2^{2(k+1)}}{3^{k+1}}]} \\
  &  =   & \| e^{\ee^{''} |x|} e^{\ee^{''} |x|} f \|_{H^1[\frac{2^{2k}}{3^{k}},\frac{2^{2(k+1)}}{3^{k+1}}]} \\
  & \geq & e^{(\ee^{''} \frac{2^{2k}}{3^{k}})} \| e^{\ee^{''} |x|} f \|_{H^1[\frac{2^{2k}}{3^{k}},\frac{2^{2(k+1)}}{3^{k+1}}]}. 
\end{eqnarray}
This means that,
\begin{equation}   
  \| e^{\ee^{''} |x|} f \|_{H^1[\frac{2^{2k}}{3^{k}},\frac{2^{2(k+1)}}{3^{k+1}}]} 
  \leq C' e^{-(\ee^{''} \frac{2^{2k}}{3^{k}})}
\end{equation}
Therefore,
\begin{eqnarray}
  \| e^{\ee^{''} |x|}f \|^2_{H^1(|x| > 1)} 
  &  =  & \sum_{k=0}^{\infty} \| e^{\ee^{''} |x|} f \|^2_{H^1[\frac{2^{2k}}{3^{k}},\frac{2^{2(k+1)}}{3^{k+1}}]} \\
  &  \leq  & C'^2 \sum_{k=0}^{\infty} e^{- \ee^{''} \frac{2^{2k+1}}{3^k} } \\
  &  <  & \infty
\end{eqnarray}

\end{proof}



By Lemma \ref{lExDecay}, to show that $ \| e^{\alpha |x|} Q \|_{H^1} < \infty $ for some $ \alpha > 0$, it suffices to show that $ \| \chi_{R,\ee} Q \|_{H^1} \leq C  $ for some constant $C$ independent of $R$. Here, $ \chi_{R,\ee} $ is the exponential weight function as in Lemma \ref{lExDecay}. 

Consider the bilinear form
\begin{equation} 
   \EE(\psi,\phi) = (\nabla \psi, \nabla \phi) 
                  + i \int (2 \bar{\psi} A \cdot \nabla \phi + \bar{\psi} (\nabla \cdot A) \phi) dx
                  + \int V \bar{\psi} \phi dx 
   \;\; \text{for} \; \psi, \phi \in H^1 
\end{equation}
associated to the magnetic Schr\"odinger operator $ - \Delta + 2 i A \cdot \nabla + i(\nabla \cdot A) + V $. Then
\begin{eqnarray}
  \EE(\psi, \psi) 
   & = & (\nabla \psi, \nabla \psi) 
          + i \int (2 \bar{\psi} A \cdot \nabla \psi + \bar{\psi} (\nabla \cdot A) \psi) dx
          + \int V \bar{\psi} \psi dx \\
   & = &  (\nabla \psi, \nabla \psi) 
          + 2 \Im( \int \bar{\psi} A \cdot \nabla \psi dx)
          + \int V \bar{\psi} \psi dx
\end{eqnarray}
Set
\begin{equation} 
  b:= \lim_{R \rightarrow \infty} \inf \{ \EE(\phi,\phi) | \phi \in H^1, \| \phi \|_2 = 1, \phi(x) = 0 \; \text{for} \; |x|<R \} 
.\end{equation}
We will show that $ b \geq 0 $ by contradiction. Suppose $ b < 0 $. Then there exists a sequence $ \phi_{R_j} \in H^1 $ with $ R_j \rightarrow \infty $, satisfying $ \| \phi_{R_j} \|_2 = 1 $, $ \phi_{R_j}(x) = 0 $ for $ 
|x| < R_j $, and $ \EE(\phi_{R_j}, \phi_{R_j}) < \delta $ for some fixed $ \delta < 0 $. 

Suppose $ V \in L^\infty$, then
\begin{equation} \int V \bar{\phi_{R_j}} \phi_{R_j} dx \leq  \| V \|_\infty \| \phi_{R_j} \|_2^2 = \| V \|_\infty .\end{equation}
Suppose $ V \in L^2 $, then
\begin{eqnarray}  
\int V \bar{\phi_{R_j}} \phi_{R_j} dx 
& \leq &  \| V \|_2 \| \phi_{R_j} \|_4^2 \\
& \lesssim &  \| V \|_2 \| \phi_{R_j} \|_2^{\frac{1}{2}} \| \nabla \phi_{R_j} \|_2^{\frac{3}{2}} \\
& \lesssim & \tilde{\dd} (\| \nabla \phi_{R_j} \|_2^{\frac{3}{2}})^{\frac{4}{3}} 
             + \frac{1}{\tilde{\dd}} (\| V \|_2 \| \phi_{R_j} \|_2^{\frac{1}{2}})^4 \\
& = & \tilde{\dd} \| \nabla \phi_{R_j} \|_2^2 + \frac{1}{\tilde{\dd}} \| V \|_2^4 .
\end{eqnarray}
Hence, 
\begin{equation} 
\int V \bar{\phi_{R_j}} \phi_{R_j} dx \lesssim \tilde{\dd} \| \nabla \phi_{R_j} \|_2^2 + \frac{1}{\tilde{\dd}} \| V \|_{L^\infty + L^2}
\;\; \text{where $\tilde{\dd}$ is sufficiently small}.
\end{equation}
Similarly, suppose $ A \in L^\infty $, then
\begin{equation} 
  |(\phi_{R_j}, A \cdot \nabla \phi_{R_j})| 
  \leq \| A \|_\infty \| \phi_{R_j} \|_2 \| \nabla \phi_{R_j} \|_2 
  = \| A \|_\infty \| \nabla \phi_{R_j} \|_2 .
\end{equation}
On the other hand, suppose $ A \in L^{(3+\tilde{\ee})} $, then 
\begin{eqnarray} 
|(\phi_{R_j}, A \cdot \nabla \phi_{R_j})| 
& \leq & \| A \|_{(3+\tilde{\ee})} \| \phi_{R_j} \|_{\frac{2(3+\tilde{\ee})}{1+\tilde{\ee}}} \| \nabla \phi_{R_j} \|_2 \\ 
& \lesssim & \| A \|_{(3+\tilde{\ee})} \| \phi_{R_j} \|_2^{\frac{5}{2} + \frac{3(1+\tilde{\ee})}{2(3+\tilde{\ee})}} 
             \| \nabla \phi_{R_j} \|_2^{\frac{5}{2} - \frac{3(1+\tilde{\ee})}{2(3+\tilde{\ee})}} \\
& = & \| A \|_{(3+\tilde{\ee})} \| \nabla \phi_{R_j} \|_2^{\frac{5}{2} - \frac{3(1+\tilde{\ee})}{2(3+\tilde{\ee})}} .
\end{eqnarray}
Hence, 
\begin{equation} 
|(\phi_{R_j}, A \cdot \nabla \phi_{R_j})| 
\lesssim  \| A \|_{L^{(3+\tilde{\ee})} + L^\infty} 
         ( \| \nabla \phi_{R_j} \|_2 + \| \nabla \phi_{R_j} \|_2^{\frac{5}{2} - \frac{3(1+\tilde{\ee})}{2(3+\tilde{\ee})}}) ,
\end{equation}
in which $ \frac{5}{2} - \frac{3(1+\tilde{\ee})}{2(3+\tilde{\ee})} $ is strictly less than 2 for $ \tilde{\ee} > 0 $.

Since $ \text{supp} (\phi_{R_j}) \subset \{ |x| \geq R_j \} $, by the assumption $ \| V_{-} \|_{(L^2+L^\infty)(|x| > R_j)} \rightarrow 0$ and $ \| A \|_{(L^{3+}+L^\infty)(|x| 
>R_j)} \rightarrow 0 $, $ \int V_- |\phi_{R_j}|^2 dx $ and the negative part of $\Im \int \overline{\phi_{R_j}} A \cdot \nabla 
\phi_R $ converge to 0. Hence, the negative part of the energy converges to 0, a contradiction. Thus $ b \geq 0 $. So 
there exists $ \delta(R) $ with $ \delta(R) \rightarrow b \geq 0 $ as $ R \rightarrow \infty $, such that for any $ 
\phi \in H^1 $ satisfying $ \phi(x) = 0 $ for $ |x| < R $, we have
\begin{equation} \EE(\phi, \phi) \geq \delta(R) \| \phi \|_2^2 .\end{equation}

For $ \phi \in H^1 $, we have
\begin{eqnarray}
  \delta(R) \| \chi_R \phi \|_2^2 
   & \leq & \EE(\chi_R \phi, \chi_R \phi) \\
   &  =   & (\nabla \chi_R \phi, \nabla \chi_R \phi) 
          - 2 \Im( \int \overline{\chi_R \phi} A \cdot \nabla \chi_R \phi dx)
          + \int V \overline{\chi_R \phi} \chi_R \phi dx .
\end{eqnarray}          
If we expand the factor $\nabla \chi_R \phi$, we get that 
\begin{eqnarray}
  (\nabla \chi_R \phi, \nabla \chi_R \phi)
  & = & (\phi \nabla \chi_R, \phi \nabla \chi_R) 
             + 2 (\phi \nabla \chi_R, \chi_R \nabla \phi) 
             + (\chi_R \nabla \phi, \chi_R \nabla \phi)
\end{eqnarray}
and since $\Im(\int |\phi|^2 A \cdot \chi_R^2 \nabla \chi_R ) = 0$
\begin{eqnarray}
  - 2 \Im( \int \overline{\chi_R \phi} A \cdot \nabla \chi_R \phi dx)
  & = & - 2 \Im(\int \chi_R^2 \overline{\phi} A \cdot \nabla \phi ) 
        - 2 \Im(\int |\phi|^2 A \cdot \chi_R^2 \nabla \chi_R ) \\
  & = & - 2 \Im(\int \chi_R^2 \overline{\phi} A \cdot \nabla \phi ).     
\end{eqnarray}
Since 
\begin{eqnarray}
2 (\phi \nabla \chi_R, \chi_R \nabla \phi) + (\chi_R \nabla \phi, \chi_R \nabla \phi)
- 2 \Im(\int \chi_R^2 \overline{\phi} A \cdot \nabla \phi )
+ \int V \overline{\chi_R \phi} \chi_R \phi dx
\end{eqnarray}
is nothing but $\EE(\chi_R^2 \phi, \phi)$, we have 
\begin{eqnarray}
  \delta(R) \| \chi_R \phi \|_2^2
  & \leq & \EE(\chi_R^2 \phi, \phi) +  \| \phi \nabla \chi_R \|_2^2 \\
  &  =   & (\chi_R^2 \phi, H_0 \phi) + e_0 \| \chi_R \phi \|_2^2 
            +  \| \phi \nabla \chi_R \|_2^2
\end{eqnarray}
where $ H_0 = - \Delta + i(A \cdot \nabla + \nabla \cdot A) + V - e_0 $.

From direct calculation, we see that for $ R > 0 $,
\begin{equation} | \nabla \chi_R | \lesssim \ee (\chi_R + 1) ,\end{equation}
so 
\begin{equation} \| \phi \nabla \chi_R \|_2^2 \lesssim \ee^2 \| \phi (\chi_R + 1) \|_2^2 .\end{equation}
Putting everything together, we have
\begin{eqnarray}
  \delta(R) \| \chi_R \phi \|_2^2
  \lesssim (\chi_R^2 \phi, H_0 \phi) + (e_0 + \ee^2) \| \chi_R \phi \|_2^2 
            +  \ee^2 \| \phi \|_2^2.
\end{eqnarray}

Since $ e_0 < 0 $ and $ \lim_{R \to \infty} \delta(R) \geq 0 $, for $ \ee $ small enough and $ R $ sufficiently large, $ \delta(R) - e_0 - \ee^2 $ is positive and bounded away from zero. Therefore, we have
\begin{equation} \| \chi_R \phi \|_2^2 \lesssim (\chi_R^2 \phi, H_0 \phi) + \ee^2 \| \phi \|_2^2 .\end{equation}

Next, 
\begin{eqnarray}
  \| \chi_R \nabla \phi \|_2^2
  &   \leq   & \| \nabla(\chi_R \phi) \|_2^2 + \| \phi \nabla \chi_R \|_2^2 \\
  & \lesssim & \EE(\chi_R \phi, \chi_R \phi) + 2 \Im( \int \overline{\chi_R \phi} A \cdot \nabla \chi_R \phi dx)
                - \int V \overline{\chi_R \phi} \chi_R \phi dx \\
  &          &  + \ee^2 \| \phi \|_2^2 \\       
\end{eqnarray}
Since 
\begin{equation}
  \Im( \int \overline{\chi_R \phi} A \cdot \nabla \chi_R \phi dx)
  \leq \| A \|_{L^\infty(|x| \geq R)} \| \chi_R \phi \|_{L^2} \| \nabla (\chi_R \phi) \|_{L^2}
\end{equation}
and 
\begin{eqnarray}
  \Im( \int \overline{\chi_R \phi} A \cdot \nabla \chi_R \phi dx)
  &  \leq  & \| A \|_{L^3 (|x| \geq R)} \| \chi_R \phi \|_{L^6} \| \nabla (\chi_R \phi) \|_{L^2} \\
  &  \leq  & \| A \|_{L^3 (|x| \geq R)} \| \chi_R \phi \|_{H^1} \| \nabla (\chi_R \phi) \|_{L^2} ,
\end{eqnarray}
we have that
\begin{eqnarray}
  \Im( \int \overline{\chi_R \phi} A \cdot \nabla \chi_R \phi dx)
  &  \leq  & \| A \|_{(L^\infty + L^3)(|x| \geq R)} \| \chi_R \phi \|_{H^1} \| \nabla (\chi_R \phi) \|_{L^2} \\
  &  \leq  & \| A \|_{(L^\infty + L^3)(|x| \geq R)} \| \chi_R \phi \|_{H^1}^2 .
\end{eqnarray}

Therefore,
\begin{equation} 
  \| \chi_R \nabla \phi \|_2^2 \lesssim \EE(\chi_R \phi, \chi_R \phi) + \| A \|_{(L^\infty + L^3)(|x| \geq R)} \| \chi_R \phi \|_{H^1}^2 
                                       + \| \chi_R \phi \|_2^2 + \ee^2 \| \phi \|_2^2   . 
\end{equation}

Now using $ \EE(\chi_R \phi, \chi_R \phi) = (\chi_R^2 \phi, H_0 \phi ) + e_0 \| \chi_R \phi \|_2^2 $ 
and $ \| \chi_R \phi \|_2^2 \lesssim (\chi_R^2 \phi, H_0 \phi) + \ee^2 \| \phi \|_2^2$, we have that
\begin{equation} 
  \| \chi_R \nabla \phi \|_2^2 
  \lesssim (\chi_R^2 \phi, H_0 \phi ) 
  + \| A \|_{(L^\infty + L^3)(|x| \geq R)} \| \chi_R \phi \|_{H^1}^2 
  + \ee^2 \| \phi \|_2^2 
.\end{equation}

Since
\begin{equation}
  \| \nabla (\chi \phi) \|_{L^2} = \| \phi \nabla \chi_R \|_{L^2} + \| \chi_R \nabla \phi \|_{L^2}
                                 \lesssim \ee \| \phi (\chi_R + 1) \|_{L^2} + \| \chi_R \nabla \phi \|_{L^2},
\end{equation}
putting everything together, we have that
\begin{equation} \| \chi_R \phi \|_{H^1}^2 \lesssim (\chi_R \phi, \chi_R H_0 \phi) + \ee^2 \| \phi \|_2^2 
                                      + \| A \|_{(L^\infty + L^3)(|x| \geq R)} \| \chi_R \phi \|_{H^1}^2 ,\end{equation}
so for $ R $ sufficiently large,
\begin{equation} \| \chi_R \phi \|_{H^1}^2 \lesssim (\chi_R \phi, \chi_R H_0 \phi) + \ee^2 \| \phi \|_2^2  .\end{equation}
If we let $ \phi = \phi_0 $ and use that $ H_0 \phi_0 = 0$, we have 
\begin{equation} \label{**} \| \chi_R \phi_0 \|_{H^1}^2 \lesssim \| \phi_0 \|_2^2 = 1 . \end{equation}
Next, let $ \phi = q $. Using that $ H_0 q = -P_c g(z \phi_0 + q) + e'q $, we get
\begin{eqnarray}
  \| \chi_R q \|_{H^1}^2 
  & \lesssim & (\chi_R q, \chi_R H_0 q) + \ee^2 \| q \|_2^2 \\
  & \lesssim & (\chi_R q, \chi_R (-P_c g(z \phi_0 + q) + e'q)) + \ee^2 \| q \|_2^2 \\
  & \lesssim & \| \chi_R^2 \; q \; g(z \phi_0 + q) \|_1 + e' \| \chi_R q \|_2^2 + \ee^2 \| q \|_2^2.
\end{eqnarray}
As $g(z) = |z|^2 z$, we have
\begin{eqnarray}
  &          & \| \chi_R^2 \; q \; g(z \phi_0 + q) \|_1 \\
  & \lesssim &  |z|^3 \| \chi_R^2 q \phi_0^3 \|_1 
               + |z|^2 \| \chi_R^2 q^2 \phi_0^2 \|_1 
               + |z| \| \chi_R^2 q^3 \phi_0 \|_1 
               + \| \chi_R^2 q^4 \|_1 \\
  & \lesssim & |z|^3 \| \chi_R^2 \phi_0^3 \|_2 \| q \|_2 
               + |z|^2 \| \chi_R^2 \phi_0^2 \|_2 \| q^2 \|_2 
               + |z| \| \chi_R^2 \phi_0 \|_2 \| \chi_R q^3 \|_2 \\
  &          & + \| \chi_R^2 q^2 \|_2 \| q^2 \|_2 \\
  & \leq     & o(z^2).       
\end{eqnarray}
Hence,
\begin{equation} \| \chi_R q \|_{H^1}^2 \leq o(z^2) \end{equation}
by (\ref{**}) and $\| q \|_{H^2} = o(z^2)$.

Next if we substitute $ \phi = D q$, and use that
\begin{equation} H_0 Dq = -P_c Dg(z \phi_0 + q) + q D e' + e' Dq , \end{equation}
we get
\begin{eqnarray}
  &          & \| \chi_R Dq \|_{H^1}^2 \\
  & \lesssim & (\chi_R Dq, \chi_R H_0 Dq) + \ee^2 \| Dq \|_2^2 \\
  & \lesssim & (\chi_R Dq, \chi_R (-P_c Dg(z \phi_0 + q) + q D e' + e' Dq)) + \ee^2 \| Dq \|_2^2 \\
  & \lesssim & \| \chi_R^2 \; Dq \; Dg(z \phi_0 + q) \|_1 + \| \chi_R^2 \; Dq \; q \; D e' \|_1 + e' \| \chi_R Dq \|_2^2 + \ee^2 \| q \|_2^2 .
\end{eqnarray}
Here, the first term $\| \chi_R^2 \; Dq \; Dg(z \phi_0 + q) \|_1$ is bounded by
\begin{eqnarray}
  &          & \| \chi_R^2 \; Dq \; Dg(z \phi_0 + q) \|_1 \\
  & \lesssim & \| \chi_R^2 \; Dq \; \phi_0 |z \phi_0 + q|^2 \|_1 \\
  & \lesssim & z^2 \| \chi_R^2 Dq \phi_o^3 \|_1 +  z \| \chi_R^2 Dq \phi_o^2 q \|_1 + \| \chi_R^2 \phi_o  q^2 \|_1 \\
  & \lesssim & z^2 \| \chi_R^2 Dq \phi_o^3 \|_1 +  z \| \chi_R^2 Dq \phi_o^2 q \|_1 + \| \chi_R^2 \phi_o  q^2 \|_1  \\
  & \lesssim & z^2 \| \chi_R Dq \|_{H^1} \| \chi_R \phi_0 \|_{H^1} \| \phi_0 \|_{H^1}^2 +  z \| Dq \|_{H^1} \| q \|_{H^1} \| \chi_R \phi_0 \|_{H^1}^2 \\
  &   \leq   & o(z^2),
\end{eqnarray}
and the second term $\| \chi_R^2 \; Dq \; q \; D e' \|_1$ is bounded by
\begin{eqnarray}
  \| \chi_R^2 \; Dq \; q \; D e' \|_1
  &   \leq   & \| \chi_R Dq \|_3 \| \chi_R q \|_3 \| De' \|_3 \\
  & \lesssim & \| \chi_R Dq \|_{H^1} \| \chi_R q \|_{H^1} \| De' \|_{H^1} \\
  &   \leq   & o(z^2).
\end{eqnarray}
Therefore, 
\begin{equation} \| \chi_R Dq \|_{H^1}^2  \leq o(z^2) .\end{equation}

Hence, by Lemma \ref{lExDecay} and $ Q = z \phi_0 + q $,  we have $ \| e^{\beta |x|} Q \|_{H^1} \leq \infty $ and $ \| e^{\beta |x|} DQ \|_{H^1} \leq \infty $ for some $ \beta > 0 $.

Next, we would like to show $ \| e^{\beta |x|} Q \|_{L^\infty} \leq \infty $ by bounding $ \| \Delta (e^{\beta |x|} Q) \|_{L^{\frac{3}{2}+}} $. Since $ \| \Delta (e^{\beta |x|} Q) \|_{L^{\infty}(|x|\leq 1)} < \infty $ already holds, it remains to show $ \| \Delta (e^{\beta |x|} Q) \|_{L^{\frac{3}{2}+}(|x| > 1)} < \infty $.
 Let $ \gamma = \frac{\beta}{3} $. Using the equation for $ Q $, we get
\begin{eqnarray}
 &          & \| \Delta (e^{\gamma |x|} Q) \|_{L^{\frac{3}{2}+}(|x| > 1)} \\
 & \lesssim & \| (\Delta e^{\gamma |x|}) Q \|_{L^{\frac{3}{2}+}(|x| > 1)} 
                + \| (\nabla e^{\gamma |x|}) \cdot (\nabla Q ) \|_{L^{\frac{3}{2}+}(|x| > 1)} \\
 &          &   + \| e^{\gamma |x|} A \cdot \nabla Q \|_{L^{\frac{3}{2}+}(|x| > 1)} 
                + \| e^{\gamma |x|} [(\nabla \cdot A) + V ]Q \|_{L^{\frac{3}{2}+}(|x| > 1)} \\
 &          &   + \| e^{\gamma |x|} g(Q) \|_{L^{\frac{3}{2}+}(|x| > 1)} + \| e^{\gamma |x|} EQ  \|_{L^{\frac{3}{2}+}(|x| > 1)} .
\end{eqnarray}
Let $ f $ and $g$ be such that $ \Delta e^{\gamma |x|} = f(x) e^{\gamma |x|}$ and $\nabla e^{\gamma |x|} = g(x) e^{\gamma |x|}$. We can bound the first two terms loosely by
\begin{equation}
  \| (\Delta e^{\gamma |x|}) Q \|_{L^{\frac{3}{2}+}(|x| > 1)} \lesssim \| e^{- \frac{2}{3} \gamma |x|} f(x) \|_{L^{6+}(|x| > 1)} \| e^{\beta |x|} Q \|_{L^{2}} 
\end{equation}
and
\begin{eqnarray} 
  &          & \| (\nabla e^{\gamma |x|}) \cdot (\nabla Q ) \|_{L^{\frac{3}{2}+}(|x| > 1)} \\
  & \lesssim & \| e^{- \frac{1}{3} \beta |x|} g(x) \|_{L^{6+}(|x| > 1)} \| e^{\frac{2}{3} \beta |x|} (\nabla Q ) \|_{L^{2}} \\
  & \lesssim & \|  e^{\frac{2}{3} \beta |x|} Q \|_{H^1} + \|  e^{-\frac{1}{3} \beta |x|} g(x) \|_{L^\infty(|x| > 1)} \|  e^{\beta |x|} Q \|_{L^2} .
\end{eqnarray}
Using similar ways, we can also bound  $ \| e^{\gamma |x|} g(Q) \|_{L^{\frac{3}{2}+}(|x| > 1)} $ and $ \| e^{\gamma |x|} EQ  \|_{L^{\frac{3}{2}+}(|x| > 1)} $.

Next, for $\| e^{\gamma |x|} A \cdot \nabla Q \|_{L^{\frac{3}{2}+}(|x| > 1)}$, we have
\begin{eqnarray}
&      & \| e^{\gamma |x|} A \cdot \nabla Q \|_{L^{\frac{3}{2}+}(|x| > 1)} \\
& \leq & \| A \|_{L^{3+} + L^\infty} 
          (\| e^{\frac{1}{3} \beta |x|} \nabla Q \|_{L^3(|x| > 1)} 
         + \| e^{\frac{1}{3} \beta |x|} \nabla Q \|_{L^{\frac{3}{2}}(|x| > 1)}) \\
& \lesssim & \| e^{\frac{-2}{3} \beta|x| } e^{\beta |x|} (\nabla Q)  \|_{L^3}
              + \| e^{\beta |x|} \nabla Q \|_{L^2}  .    
\end{eqnarray}
We already shown above that $ \| e^{\beta |x|} \nabla Q \|_{L^2} < \infty $. To bound $ \| e^{\frac{-2}{3} \beta|x| } e^{\beta |x|} (\nabla Q) \|_{L^3} $, let $ h = e^{\beta |x|} (\nabla Q) $ and from above, we know that $ h \in L^2 $. Now, consider the set 
\begin{equation} 
 M = \{ x | (e^{-\frac{2}{3} \beta |x|} |h|)^3 > |h|^2 \} 
   = \{ x | |h| >  e^{ 2 \beta |x|} \}. 
\end{equation}
Clearly, 
\begin{equation}
  \| e^{\frac{-2}{3} \beta|x| } e^{\beta |x|} (\nabla Q)  \|_{L^3(M^c)} = 
  \| e^{\frac{-2}{3} \beta|x| } h \|_{L^3(M^c)} 
  \leq \| |h|^{\frac{2}{3}} \|_{L^3} = \| h \|_{L^2}^{\frac{2}{3}} < \infty .
\end{equation} 
On the other hand, inside $M$, $ |e^{\beta |x|} (\nabla Q)| > e^{2 \beta |x|} $ and hence, $ |\nabla Q| > e^{\beta |x|} $. Then
\begin{eqnarray}
  \| e^{\frac{-2}{3} \beta|x| } e^{\beta |x|} (\nabla Q)  \|_{L^3(M)} 
  & \leq & \| e^{\frac{-2}{3} \beta|x|} |\nabla Q|^2 \|_{L^3}  \\
  & \leq & \| |\nabla Q|^2 \|_{L^3} \\
  &  =   & \| \nabla Q \|_{L^6}^3 \\
  &  \lesssim & \| \nabla Q \|_{H^1}^3 .
\end{eqnarray}

Hence, we have
\begin{equation} \| \Delta (e^{\gamma |x|} Q) \|_{L^{\frac{3}{2}+}} \leq \infty .\end{equation}
By Sobolev embedding, we have
\begin{equation} \| e^{\gamma |x|} Q \|_{L^{\infty}} \leq \infty  .\end{equation}
%
%
\section{Proof of $ \| \phi \|_{W^{2,p}} \sim \| H_1 \phi \|_{L^p} $}
Recall that $ H_1 = H + K  = - \Delta + i(2 A \cdot \nabla + \nabla \cdot A) + V + K $.
Let
\begin{equation} W = i \nabla \cdot A + V + K.\end{equation}
Then
\begin{equation}  
  \| H_1 \phi \|_{L^p} \leq \| \Delta \phi \|_{L^p} + \| W \phi \|_{L^p} + 2 \| A \cdot \nabla \phi \|_{L^p} \lesssim \| \phi \|_{W^{2,p}}.
\end{equation}
In the above, we bounded $ \| A \cdot \nabla \phi \|_{L^p} $ by $ \| A \|_{L^{\infty}} \| \nabla \phi \|_{L^p}$.
Next,
\begin{eqnarray}
  \| \phi \|^2_{W^{2,p}}
  & \lesssim & \| \Delta \phi \|_{L^p}^2 + \| \phi \|_{L^p}^2 \\
  &     =    & \| (-H_1 + W + i2 A \cdot \nabla) \phi \|^2_{L^p} + \| \phi \|^2_{L^p} \\
  & \lesssim & \| H_1 \phi \|^2_{L^p} + \| W \phi \|_{L^p}^2 + 2 \| A \cdot \nabla \phi \|^2_{L^p} + \| \phi \|^2_{L^p} \\ 
  & \lesssim & \| H_1 \phi \|^2_{L^p} + \| W \|^2_{\infty} \| \phi \|_{L^p}^2 
                 + 2 \| \nabla A \|^2_{\infty} \| \phi \|^2_{L^p} + \| \phi \|^2_{L^p} \\ 
  & \lesssim & \| H_1 \phi \|^2_{L^p} + \| \phi \|^2_{L^p} .
\end{eqnarray}
To bound $ \| \phi \|^2_{L^p} $,consider
\begin{equation}
  (| \phi |^{p-2} \phi, H_1 \phi) = (|\phi|^{p-2} \phi, - \Delta \phi) + \int W |\phi|^p + \int |\phi|^{p-2} \phi A \cdot \nabla \bar{\phi}.
\end{equation}
Taking real parts on both sides, we get
\begin{equation}  
  \Re (| \phi |^{p-2} \phi, H_1 \phi) 
  - 2 \Re \int |\phi|^{p-2} \phi A \cdot \nabla \bar{\phi} 
  - \Re (|\phi|^{p-2} \phi, - \Delta \phi)
  \geq \int W |\phi|^p \geq C \| \phi \|^p_{L^p}
\end{equation}
by choosing $ K \geq \| V \|_\infty + \| \nabla \cdot A \|_\infty + C + 1$ where $ C $ is a large constant that will be used below. Using that 
\begin{eqnarray} 
   \int |\phi|^{p-2} |\nabla \phi|^2 + (p-2) \int |\phi|^{p-4} |\Re (\bar(\phi) \nabla \phi)|^2
  & = & \Re \int \nabla(|\phi|^{p-2} \bar{\phi}) \cdot \nabla \phi \\
  & = & \Re (|\phi|^{p-2} \phi, - \Delta \phi) \geq 0 ,
\end{eqnarray}
we get
\begin{eqnarray}
  C \| \phi \|^p_{L^p}
  & \lesssim & \| H_1 \phi \|_{L^p} \| |\phi|^{p-1} \|_{L^{\frac{p}{p-1}}} 
               + \| A \|_{\infty} \| |\phi|^{p-2} \phi \nabla \bar{\phi} \|_{L^1} \\
  &          & - \int |\phi|^{p-2} |\nabla \phi|^2 - (p-2) \int |\phi|^{p-4} |\Re (\bar(\phi) \nabla \phi)|^2  \\
  & \lesssim & \| H_1 \phi \|_{L^p} \| \phi \|^{p-1}_{L^p} 
               + \| A \|_{\infty} \| |\phi|^{\frac{p-2}{2}} \nabla \phi \|_{L^2}  \| \phi^{\frac{p}{2}} \|_{L^2}
               - \| |\phi|^{\frac{p-2}{2}} |\nabla \phi| \|_{L^2} \\
  & \lesssim & \| H_1 \phi \|_{L^p} \| \phi \|^{p-1}_{L^p} 
               + \ee^2 \| |\phi|^{\frac{p-2}{2}} \nabla \phi \|_{L^2} 
               + \frac{1}{\ee^2} \| A \|_{\infty} \| \phi \|_{L^p}^p \\
  &          & - \| |\phi|^{\frac{p-2}{2}} |\nabla \phi| \|_{L^2} .
\end{eqnarray}
Now if we choose $ \ee $ small enough, we have
\begin{equation} C \| \phi \|^p_{L^p} 
  \lesssim \| H_1 \phi \|_{L^p} \| \phi \|^{p-1}_{L^p} 
  + \frac{1}{\ee^2} \| A \|_{\infty} \| \phi \|_{L^p}^p .
\end{equation}
Dividing by $ \| \phi \|^{p-1}_{L^p} $, we have 
\begin{equation} C \| \phi \|_{L^p} \lesssim \| H_1 \phi \|_{L^p} + \frac{1}{\ee^2} \| A \|_{\infty} \| \phi \|_{L^p} .\end{equation}
Finally, if we choose $ C $ large enough and put everything together, we have
\begin{equation} \| \phi \|_{W^{2,p}} \lesssim \| H_1 \phi \|_{L^p}  .\end{equation}
\section*{Acknowledgement}
The author would like to thank Stephen Gustafson and Tai-Peng Tsai for many helpful discussions.
\bibliographystyle{plain}

\end {document}